\newcommand{\id}{{\rm Id}}
\newcommand{\tc}{{\rm TC}}
\newcommand{\cd}{{\rm cd}}
\newcommand{\cat}{{\rm cat}}
\newcommand{\secat}{{\rm secat}}
\newcommand{\height}{{\rm height}}
\newcommand{\fH}{\langle H\rangle}
\newcommand{\mZ}{\mathbb{Z}}
\DeclareMathOperator{\Hom}{Hom}
\DeclareMathOperator{\Ext}{Ext}
\newcommand{\E}{{\rm E}}
\newtheorem{theorem}{Theorem}[section]
\newtheorem{proposition}[theorem]{Proposition}
\newtheorem{lemma}[theorem]{Lemma}
\newtheorem{corollary}[theorem]{Corollary}
\theoremstyle{definition}
\newtheorem{definition}[theorem]{Definition}
\theoremstyle{remark}
\newtheorem{example}[theorem]{Example}					
\newtheorem{remark}[theorem]{Remark}
\begin{document}
	
	\title[On the sectional category of subgroup inclusions]{On the sectional category of subgroup inclusions and Adamson cohomology theory}
	
	\author[Z. B\l{}aszczyk]{Zbigniew B\l{}aszczyk}
	\address{Faculty of Mathematics and Computer Science,
		Adam Mickiewicz University, Umultowska 87, 60-479 Pozna\'n, Poland.}
	\email{blaszczyk@amu.edu.pl}
	
	\author[J. Carrasquel-Vera]{Jos\'e Gabriel Carrasquel-Vera}
	\address{Faculty of Mathematics and Computer Science,
		Adam Mickiewicz University, Umultowska 87, 60-479 Pozna\'n, Poland.}
	\email{jgcarras@amu.edu.pl, jgcarras@gmail.com}
	
	\author[A. Espinosa Baro]{Arturo Espinosa Baro}
	\address{Faculty of Mathematics and Computer Science,
		Adam Mickiewicz University, Umultowska 87, 60-479 Pozna\'n, Poland.}
	\email{arturo.espinosabaro@gmail.com, artesp1@amu.edu.pl}
	
	\subjclass{55M30 (68T40)}
	\keywords{LS category, equivariant topological complexity, sectional category.}
	
	\begin{abstract}
		The sectional category of a subgroup inclusion $H \hookrightarrow G$ can be defined as the sectional category of the corresponding map between Eilenberg--MacLane spaces. We extend a characterization of topological complexity of aspherical spaces given by Farber, Grant, Lupton and Oprea to the context of sectional category of subgroup inclusions and investigate it by means of Adamson cohomology theory. 
	\end{abstract}
	
	\maketitle
	
\makeatletter \def\l@subsection{\@tocline{2}{0pt}{2pc}{6pc}{}}  \makeatother
	
	
\section*{Introduction}
	The \textit{sectional category} of a map $f \colon X \to Y$, written $\secat(f)$, is defined to be the smallest integer $n \geq 0$ such that there exists an open cover $U_0$, \ldots, $U_n$ of~$Y$ and continuous maps $s_i \colon U_i \to X$ with the property that $f \circ s_i$ is homotopic to the inclusion $U_i \hookrightarrow Y$ for any $0 \leq i \leq n$. This concept was introduced by Schwarz~\cite{Schwarz66} under the name \textit{genus} for $f$ a fibration, and it was subsequently generalized to arbitrary maps by Fet \cite{Fet} and Berstein--Ganea~\cite{BernsteinGanea}.
	
	A particular case of sectional category is the famous Lusternik--Schnirelmann category. Another interesting instance is the \textit{topological complexity} of a space~$X$, denoted $\tc(X)$, which arises as the sectional category of the path space fibration
	\[ X^{[0,1]} \to X \times X, \; \gamma \mapsto \big(\gamma(0), \gamma(1)\big). \]
	It was introduced by Farber \cite{Farber03, Farber08} in order to tackle the motion planning problem in robotics from a topological perspective and quickly proved to be an interesting invariant in its own right. 
	
	A problem that has fuelled much of $\tc$-related work is the characterization of topological complexity of Eilenberg--MacLane $K(\pi,1)$ spaces. Given that topological complexity is invariant under homotopy equivalence and that the homotopy type of a $K(\pi,1)$ space is determined by $\pi$, it is common to define $\tc(\pi)$ as $\tc\big(K(\pi,1)\big)$. Farber \cite{Farber06b} asked whether one can express $\tc(\pi)$ in terms of algebraic properties of $\pi$. This question is motivated by a phenomenon that occurs for Lusternik--Schnirelmann category of $K(\pi,1)$ spaces, recorded in a celebrated paper of Eilenberg and Ganea~\cite{EG65}. Leaving certain fringe cases aside, their result states that $\cat\big(K(\pi,1)\big) = \cd\,\pi$, where $\cd\,\pi$ stands for the \textit{cohomological dimension} of $\pi$, i.e. the length of the shortest possible projective $\mathbb{Z}[\pi]$-resolution of the trivial $\mathbb{Z}[\pi]$-module $\mathbb{Z}$ or, equivalently, the greatest integer $n \geq 0$ such that $H^n(\pi, M) \neq 0$ for some $\mathbb{Z}[\pi]$-module~$M$. 
	
	Up until recently, any progress in this context was mostly related to a specific choice of a family of groups: choose a family of groups, then use its characteristic features (e.g. a particularly well understood cohomology ring or a specific subgroup structure) to deduce, or at least estimate, topological complexity of its members. Perhaps the most comprehensive result in this direction was obtained by Farber and Mescher~\cite{FarberMes}. They proved that if a group $\pi$ is hyperbolic in the sense of Gromov and it admits a compact model of a $K(\pi,1)$ space, then its topological complexity is equal to either $\cd\,(\pi\times \pi)$ or $\cd\,(\pi\times \pi)-1$. Continuing on that line of thought, Dranishnikov in \cite{Dranish20} improved that estimation by showing that, when $\pi$ is hyperbolic, $\tc(\pi) = \cd(\pi \times \pi) = 2 \cd(\pi)$. Further on, very recently Li \cite{Li20} showed a generalization of this for certain toral relatively hyperbolic groups. However, in a recent breakthrough, Farber, Grant, Lupton and Oprea \cite{FGLO17} related $\tc(\pi)$ to invariants coming from equivariant Bredon cohomology. More specifically, they proved that 
	\[ \tc(\pi) \leq \cd_{\langle\Delta_\pi\rangle} (\pi\times \pi),\]
	where $\cd_{\langle\Delta_\pi\rangle} (\pi\times \pi)$ denotes the cohomological dimension of $\pi\times \pi$ with respect to the family of subgroups of~$\pi \times \pi$ ``generated by'' the diagonal subgroup $\Delta_\pi$. This number can be seen as the smallest possible dimension of $E_{\langle\Delta_{\pi}\rangle}(\pi \times \pi)$, the classifying space for $(\pi \times\pi)$-actions with isotropy groups in the family $\langle\Delta_{\pi}\rangle$. (See Subsections \ref{sect:secat_characterization} and \ref{sect:AdamsonvsBredon} for details.)
	
	The objective of this article is twofold. One, we begin a systematic study of the sectional category of subgroup inclusions: given a group $G$ and its subgroup~$H$, we define $\secat(H \hookrightarrow G)$ as the sectional category of the corresponding map between Eilenberg--MacLane spaces. This setting includes $\tc$, as the topological complexity of $X$ can be seen as the sectional category of the diagonal inclusion $X \to X \times X$, so that $\tc(\pi) = \secat(\Delta_{\pi} \hookrightarrow \pi \times \pi)$. In fact, the cornerstone of \cite{FGLO17}, a characterization of $\tc(\pi)$ as the smallest integer $n \geq 0$ such that a certain canonical $(\pi \times \pi)$-equivariant map $E(\pi \times \pi) \to E_{\langle \Delta \pi\rangle}(\pi\times \pi)$ can be equivariantly deformed into the $n$-dimensional skeleton of $E_{\langle \Delta_\pi\rangle}(\pi\times\pi)$, has a generalization to this more general context. We also describe and develop a ``relative canonical class'' analogous to the one developed by Berstein and Schwarz for the study of Lusternik--Schnirelmann category theory. Two, we introduce the Adamson cohomology theory \cite{Adamson} into the study of $\secat(H \hookrightarrow G)$, hence also into the study of $\tc(\pi)$. In particular, we exhibit a relationship between the ``zero-divisors'' of $H^*(G, M) \to H^*(H,M)$, which provide a lower bound for $\secat(H\hookrightarrow G)$, and the Adamson cohomology of the pair $(G,H)$.\medskip

	\noindent\textbf{Notation.} Throughout the paper $G$ is a discrete group and $H \subseteq G$ its fixed subgroup, and $G/H$ denotes the set of left cosets of $H$ in $G$ equipped with a canonical $G$-action. Whenever we specialize to the setting of topological complexity, we take $G = \pi \times \pi$ and $H = \Delta_{\pi}$, the diagonal subgroup of $\pi \times \pi$. 
	
	Given a short exact sequence of $G$-modules
	\[0 \rightarrow A\stackrel{i}{\rightarrow} B\rightarrow C\rightarrow 0,\] 
	and a $G$-module map $f\colon B\to M$ with $f\circ i=0$, we will write $\hat{f}$ for the induced map
	
	\[\begin{tikzcd}
		A\ar[r, "i"]&B\ar[d, "f"]\ar[r]&C\ar[dl, "\hat{f}"] \\
		&M.
	\end{tikzcd}\]
	
	\section{Sectional category of subgroup inclusions}
	
	The inclusion $i \colon H \hookrightarrow G$ induces a map $K(i,1) \colon K(H,1) \to K(G,1)$ between the corresponding Eilenberg--MacLane spaces which satisfies $\pi_1\big(K(i,1)\big)=i$. Define the \emph{sectional category} of the inclusion $H \hookrightarrow G$, denoted $\secat(H\hookrightarrow G)$, as the sectional category of $K(i,1) \colon K(H,1) \to K(G,1)$. Due to homotopy invariance of sectional category, $\secat(H \hookrightarrow G)$ depends only on the conjugacy class of $H$ in $G$.
	
	Further on we will make use of some basic properties of sectional category, so let us recall them. For a detailed proof, we refer the interested reader to \cite{Schwarz66}.
	
	\begin{proposition}\label{propertysecat}
		Let $F \rightarrow E \xrightarrow{p} B$ a fibration, then
		
		\begin{enumerate}
			\item $\secat(p) \leq \cat(B)$.
			\item Let $k > 0$ the maximal integer such that there exist $$ u_1, \cdots, u_k \in \ker \{ \tilde{H}^*(B,R) \xrightarrow{p^*} \tilde{H}^*(E,R) \} $$ with $u_1 \smile \cdots \smile u_k \neq 0$. Then $\secat(p) \geq k$.
			\item $\secat(p) \leq k$ if and only if the $(k+1)$-fold fibrewise join of $p$ (see below) has a section.  
		\end{enumerate} 
		
	\end{proposition}
	
	\subsection{A characterization of $\secat(H \hookrightarrow G)$}\label{sect:secat_characterization} 
	
	Recall that a family $\mathcal{F}$ of subgroups of $G$ is said to be \textit{full} provided that it is non-empty and is closed under conjugation and the taking of subgroups. This also entails the fact that the family is closed under taking intersections as well. We will write $\langle H \rangle$ for the smallest full family of subgroups of $G$ containing $H$. The \textit{classifying space of $G$ with respect to $\mathcal{F}$} is a $G$-CW complex $E_{\mathcal{F}}G$ satisfying the following conditions:
	
	\begin{itemize}
		\item every isotropy group of $E_{\mathcal{F}}G$ belongs to $\mathcal{F}$,
		\item for any $G$-CW complex $X$ with all isotropy groups in $\mathcal{F}$ there exists a unique (up to $G$-equivariant homotopy) $G$-equivariant map $X \to E_{\mathcal{F}}G$. 
	\end{itemize}
	In particular, there is a unique $G$-equivariant map $EG \to E_{\mathcal{F}}G$, where $EG$ is the classifying space of $G$ with respect to the family consisting of the trivial subgroup, or, in other words, the universal cover of a $K(G,1)$ space. Further properties of $E_{\mathcal{F}}G$ are discussed at length in \cite{Luck}.
	
	The aim of this subsection is to prove the following result.
	
	\begin{theorem}\label{thm:secat_FGLO_char}
		The sectional category of $H \hookrightarrow G$ coincides with the minimal integer $n\geq 0$ such that the $G$-equivariant map $\rho \colon EG \to E_{\langle H\rangle}G$ can be $G$-equivariantly factored up to $G$-homotopy as
		$$\begin{tikzcd} EG \arrow[rr, "\rho"] \arrow[rd] & & E_{\langle H \rangle} G \\
			& (E_{\langle H \rangle} G)_n, \arrow[ru] \end{tikzcd} $$ 
		where $(E_{\langle H \rangle} G)_n$ denotes the $n$-skeleton of $E_{\langle H \rangle} G$.
	\end{theorem}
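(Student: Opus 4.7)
The plan is to realize the map $K(i,1) \colon K(H,1) \to K(G,1)$ as the orbit map $p \colon EG/H \to EG/G$, and to convert back and forth between sectional data for $p$ and $G$-equivariant data on $EG$. The pullback of $p$ along the universal cover $EG \to EG/G$ is the trivial $G$-equivariant covering $EG \times G/H \to EG$, so by elementary covering space theory a partial homotopy section of $p$ over an open set $U \subseteq EG/G$ is the same datum as a $G$-equivariant continuous map $\tilde{U} \to G/H$, where $\tilde{U} \subseteq EG$ is the preimage of $U$. The coset space $G/H$ is a $0$-dimensional $G$-CW complex with isotropy in $\langle H\rangle$, and its $(n+1)$-fold join $(G/H)^{*(n+1)}$ is an $n$-dimensional $G$-CW complex whose isotropy groups, being intersections of at most $n+1$ conjugates of $H$, still lie in $\langle H\rangle$.

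Assuming $\secat(H \hookrightarrow G) \leq n$, I would fix an open cover $U_0,\dots,U_n$ of $EG/G$ with partial sections and pull back a subordinate partition of unity to a $G$-invariant one on $\{\tilde{U}_i\}$. Assembling the associated equivariant maps $f_i \colon \tilde{U}_i \to G/H$ via the standard join formula produces a $G$-equivariant map $F \colon EG \to (G/H)^{*(n+1)}$. The universal property of $E_{\langle H\rangle}G$ then supplies a $G$-equivariant map $(G/H)^{*(n+1)} \to E_{\langle H\rangle}G$, which by equivariant cellular approximation can be taken to land in $(E_{\langle H\rangle}G)_n$; uniqueness in the same universal property shows the composite is $G$-homotopic to $\rho$.

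For the converse, given a $G$-equivariant factorization of $\rho$ through $(E_{\langle H\rangle}G)_n$, I would construct a $G$-invariant open cover $V_0,\dots,V_n$ of $(E_{\langle H\rangle}G)_n$ by thickening the open $k$-cells into $G$-invariant tubular neighborhoods that $G$-equivariantly deformation retract onto the disjoint $G$-orbits of open $k$-cells. Each such orbit has type $G/K$ with $K \in \langle H\rangle$, hence $K \subseteq gHg^{-1}$ for some $g \in G$; choosing such a $g$ orbit-by-orbit defines a $G$-equivariant map $V_k \to G/H$. Pulling back along $EG \to (E_{\langle H\rangle}G)_n$ yields a $G$-invariant open cover of $EG$ equipped with $G$-equivariant maps to $G/H$; these descend to an open cover of $EG/G$ whose associated maps, by the translation of the first paragraph, are partial homotopy sections of $p$.

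The main obstacle will be the converse direction: producing the cover $V_0,\dots,V_n$ demands a careful equivariant thickening of the skeleton together with orbit-by-orbit choices of conjugators, and one must also verify that the $G$-homotopy between the factorization and $\rho$ genuinely converts into homotopies on partial sections. A cleaner route worth exploring is Schwarz's fibrewise-join criterion, according to which $\secat(p) \leq n$ is equivalent to the existence of a section of the $(n+1)$-fold fibrewise join of $p$; via the covering-space translation this reduces to the existence of a $G$-equivariant map $EG \to (G/H)^{*(n+1)}$, which by equivariant obstruction theory is equivalent to the desired factorization through $(E_{\langle H\rangle}G)_n$.
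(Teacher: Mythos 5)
Your ``cleaner route'' at the end is precisely the paper's proof: it reduces $\secat(H\hookrightarrow G)\le n$ via Schwarz's fibrewise-join criterion and the Borel-construction/covering-space dictionary to the existence of a $G$-equivariant map $EG \to \ast^{n+1}(G/H)$, and then uses the join model $\ast^{\infty}(G/H)$ for $E_{\langle H\rangle}G$ together with equivariant cellular approximation and the equivariant Whitehead theorem to identify this condition with the factorization of $\rho$ through the $n$-skeleton. The hands-on partition-of-unity argument of your first two paragraphs, whose converse direction you rightly flag as delicate, is thereby rendered unnecessary.
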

	
	\noindent This is a generalization of Farber, Grant, Lupton and Oprea's \cite[Theorem~3.3]{FGLO17}, where $\tc(\pi)$ is described as the minimal integer $n\geq 0$ such that the $(\pi \times \pi)$-equivariant map $E(\pi \times \pi) \to E_{\langle\Delta_{\pi}\rangle}(\pi \times \pi)$ can be equivariantly deformed into the $n$-skeleton of $E_{\langle\Delta_{\pi}\rangle}(\pi \times \pi)$. Our approach closely follows theirs. This was very recently proved independently in \cite{CLM}, albeit by different means. The next lemma is an abstraction of an intermediate step in the proof of \cite[Theorem 2.1]{FGLO17}. 
	
	\begin{lemma}\label{lemma:secat_FGLO_char_aux}
		We have $\secat(H \hookrightarrow G) \leq n$ if and only if the Borel fibration
		\[ p_{n}\colon EG \times_G \ast^{n+1} (G/H) \rightarrow EG/G \] 
		has a section, where $\ast^{n+1} (G/H)$ denotes the $(n+1)$-fold join of $G/H$.
	\end{lemma}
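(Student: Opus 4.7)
The strategy is Schwarz's classical characterization of sectional category in terms of sections of iterated fiberwise joins, reinterpreted through the Borel construction. First, I would replace $K(i,1)\colon K(H,1)\to K(G,1)$ by an honest fibration. The Borel construction yields $q\colon EG\times_G (G/H)\to EG/G$, whose total space is canonically identified with $EG/H$ (using that $G$ acts transitively on $G/H$ with stabilizer $H$), which is a model for $K(H,1)$, and whose base is $EG/G\simeq K(G,1)$. A routine check confirms that $q$ is a fibration replacement of $K(i,1)$, so $\secat(H\hookrightarrow G)=\secat(q)$.

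Next, I would invoke Schwarz's theorem \cite{Schwarz66}: for any fibration $p\colon E\to B$, $\secat(p)\leq n$ if and only if the $(n+1)$-fold fiberwise join $E\ast_B\cdots\ast_B E\to B$ admits a global section. Applied to $q$, the lemma reduces to identifying the $(n+1)$-fold fiberwise join of $q$ with the Borel fibration $p_n$ displayed in the statement. Concretely, the key point is to exhibit a fiberwise homeomorphism
\[(EG\times_G (G/H))\ast_{EG/G}\cdots\ast_{EG/G}(EG\times_G (G/H))\;\cong\; EG\times_G \ast^{n+1}(G/H),\]
with $G$ acting diagonally on the $(n+1)$-fold join on the right, and with the map to $EG/G$ being the obvious projection.

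The main obstacle is precisely this compatibility of the Borel construction with the fiberwise join. I plan to prove it by writing the ordinary join as a pushout, $F_1\ast F_2\cong F_1\times CF_2\cup_{F_1\times F_2}CF_1\times F_2$, and then using that $EG\times_G -$ preserves pushouts of $G$-spaces and converts diagonal products into fibered products over $EG/G$; both properties are formal consequences of the fact that $EG\to EG/G$ is a free principal $G$-bundle. An easy induction on $n$ then yields the required homeomorphism, and combining this identification with the two preceding steps completes the argument.
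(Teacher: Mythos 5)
Your proposal is correct and follows essentially the same route as the paper: identify $p_0$ with $EG/H\to EG/G$, apply Schwarz's join criterion, and then identify the $(n+1)$-fold fibrewise join of $p_0$ with $p_n$. The only difference is cosmetic: where you derive the last identification formally from the pushout description of the join and the colimit/product compatibility of $EG\times_G(-)$, the paper simply writes down the explicit homeomorphism $G\bigl(x,\sum t_i g_iH\bigr)\mapsto\sum t_i\,G(x,g_iH)$.
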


	Recall that the \textit{fibrewise join} of a fibration $F \to E \stackrel{p}{\rightarrow} B$ is a fibration $p * p \colon E *_B E \to B$ whose fibre has the homotopy type of the join $F *F$ and whose total space is given by
	\[ \big\{(x,y,t) \in E \times E \times [0,1] \,\big|\, p(x) = p(y) \big\} \]
	modulo the relations $(x,y,0) \sim (x',y,0)$ and $(x,y,1) \sim (x,y',1)$. It is convenient to think of the elements of the total space of the $n$-fold fibrewise join of $p$ as formal sums of the form $\sum_{i=1}^n t_i x_i$, where every $x_i$ is understood to lie in the same fibre of $p$, and $t_i$'s are non-negative real numbers such that $\sum_{i=1}^n t_i = 1$.
	
	\begin{proof}
		The map $EG \times_G (G/H) \to EG/H$ given by $G(x, gH) \mapsto Hg^{-1}x$ is easily seen to be a homeomorphism which commutes with projections onto $EG/G$. Consequently, $p_0$ is isomorphic to the fibration $EG/H \to EG/G$, which is a model for the map $K(H,1) \to K(G,1)$.
		
		It follows that $\secat(H \hookrightarrow G) = \secat(p_0)$. By \ref{propertysecat}, $\secat(p_0)\leq n$ if and only if the $(n+1)$-fold fibrewise join of $p_0$ has a section. Thus in order to conclude the proof, it remains to verify that the $(n+1)$-fold fibrewise join of $p_0$ coincides with~$p_n$. To this end, note that the map $EG \times_G \ast^{n+1} (G/H) \to \ast^{n+1}_{EG/G}\big(EG \times_G (G/H)\big)$ given by
		\[ G\left(x, \sum_{i=1}^{n+1} t_i g_iH\right) \mapsto \sum_{i=1}^{n+1} t_iG(x, g_iH) \]
		is a homeomorphism which commutes with projections onto $EG/G$. 
	\end{proof}
	
	\begin{proof}[Proof of Theorem \ref{thm:secat_FGLO_char}] In view of \cite[{Chapter 4, Theorem 8.1}]{Husemoller66}, sections of the fibration \[p_n \colon EG \times_G \ast^{n+1} (G/H) \rightarrow EG/G\] introduced in Lemma \ref{lemma:secat_FGLO_char_aux} are in one-to-one correspondence with $G$-equivariant maps $EG \to *^{n+1} (G/H)$. Consequently, Lemma \ref{lemma:secat_FGLO_char_aux} can be restated as saying that $\secat(H \hookrightarrow G)$ coincides with the minimal integer $n \geq 0$ such that there exists a $G$-equivariant map $EG \to *^{n+1} (G/H)$. 
		
		Let $m\geq 0$ be the minimal integer such that the $G$-equivariant map $EG \to E_{\langle H\rangle}G$ can be deformed into the $m$-dimensional skeleton of $E_{\langle H\rangle}G$. We will now use the fact that the infinite join $*^{\infty} (G/H)$ is a model for $E_{\langle H \rangle}G$, as explained in~\cite[Section IV]{Blowers}. Given that $\dim *^{n+1} (G/H) = n$, the existence of a $G$-equivariant map $EG \to *^{n+1} (G/H)$ implies the existence of a $G$-equivariant map $EG \to *^{n+1} (G/H) \to (E_{\fH} G)_n \to E_{\fH}G$ by the equivariant cellular approximation theorem. Since any two $G$-equivariant maps $EG \to E_{\langle H \rangle}G$ are $G$-equivariantly homotopic, this last composition is $\rho$ and we see that $\secat(H \hookrightarrow G) \geq m$.
		
		On the other hand, the $G$-equivariant map $(E_{\langle H\rangle}G)_m \to *^{\infty}(G/H)$ yields a $G$-equivariant map $(E_{\langle H\rangle}G)_m \to *^{m+1}(G/H)$ by the equivariant Whitehead theorem \cite[{Chapter 1, Theorem 3.1}]{May}. This in turn implies the existence of a $G$-equivariant map $EG \to (E_{\langle H\rangle}G)_m \to *^{m+1}(G/H)$, which shows that $\secat(H\hookrightarrow G) \leq m$. 
	\end{proof}
	
	As an immediate corollary, we obtain a generalization of [\cite{FGLO17}, Corollary 3.5.1]:
	
	\begin{corollary}\label{secatlessdim}
		Let $H \hookrightarrow G$ be a monomorphism of groups, then \[ \secat(H \hookrightarrow G) \leq \dim E_{\langle H \rangle}G. \] 
	\end{corollary} 
	
	\subsection{The Berstein class of $G$ relative to $H$}\label{sect:Bernstein_class}
	
	We will now recall a construction from \cite{DranishRudyak09}. Take the usual augmentation associated to a group $G$ $$0\hookrightarrow K \hookrightarrow \mathbb{Z}[G] \xrightarrow{\varepsilon} \mathbb{Z} \rightarrow 0 $$ and $K^{\otimes n}$ with $G$-module structure induced by the diagonal action of $G$ on the tensor product of copies of $K$. Given that $K^{\otimes n}$ is a free abelian group, and that there exists an isomorphism $g \otimes m \mapsto g \otimes g m$ from $\mathbb{Z}[G] \otimes K^{\otimes n}$ with action on the first factor to $\mathbb{Z}[G] \otimes K^{\otimes n}$ with the diagonal one, we have that $\mathbb{Z}[G] \otimes K^{\otimes n}$ is a free $G$-module. Splicing together short exact sequences of $G$-modules
	$$ 0 \rightarrow K^{\otimes n+1} \rightarrow \mathbb{Z}[G] \otimes K^{\otimes n} \xrightarrow{\varepsilon \otimes \textrm{id}} K^{\otimes n} \rightarrow 0 $$ 
	yields a $G$-module free resolution of $\mathbb{Z}$ 
	$$ \cdots \rightarrow \mathbb{Z}[G]\otimes K^{\otimes n+1} \rightarrow \mathbb{Z}[G]\otimes K^{\otimes n} \rightarrow \cdots \rightarrow \mathbb{Z}[G] \xrightarrow{\varepsilon} \mathbb{Z} \to 0 $$ 
	that will be denoted by $\mathcal{G}$.
	
	We now give a simple description of the cup product using $\mathcal{G}$.
	
	\begin{proposition}\label{pro:DescriptCup}
		Let $[a]\in H^p(G,A)$ and $[b]\in H^q(G,B)$ be cohomology classes represented by cocycles $a\colon \mathbb{Z}[G]\otimes K^{\otimes p}\to A$ and $b\colon\mathbb{Z}[G]\otimes K^{\otimes q}\to B$. Then the cup product $[a][b]\in H^{p+q}(G,A\otimes B)$ is represented by the map \[\mathbb{Z}[G]\otimes K^{\otimes p+q}\xrightarrow{\varepsilon\otimes\id}K^{\otimes p+q}\xrightarrow{\hat{a}\otimes\hat{b}} A\otimes B.\]
	\end{proposition}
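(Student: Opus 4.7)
The plan is to realize the cup product at the chain level via a carefully chosen diagonal approximation on the resolution $\mathcal{G}$. First, a $G$-module cocycle $a \colon \mathcal{G}_p \to A$ is uniquely determined by the $G$-map $\hat a \colon K^{\otimes p} \to A$ induced through the defining short exact sequence, with $a = \hat a \circ (\varepsilon \otimes \id)$. Indeed, the cocycle condition $a \partial = 0$ amounts to $a$ vanishing on the image of $K^{\otimes(p+1)} \hookrightarrow \mathcal{G}_p$, so $a$ factors through the quotient $K^{\otimes p}$; conversely, every $G$-map out of $K^{\otimes p}$ yields a cocycle because $\varepsilon$ vanishes on $K$. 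The same argument shows that $c := (\hat a \otimes \hat b) \circ (\varepsilon \otimes \id) \colon \mathcal{G}_{p+q} \to A \otimes B$ is itself a cocycle, since $c\partial$ applied to $g \otimes x_1 \otimes \cdots \otimes x_{p+q+1}$ produces the factor $\varepsilon(x_1) = 0$.

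To prove that $[c] = [a][b]$, we use the chain-level description of the cup product. The complex $\mathcal{G} \otimes \mathcal{G}$ with the diagonal $G$-action is itself a free resolution of $\mathbb{Z}$ over $\mathbb{Z}[G]$: freeness follows from the shearing isomorphism $g \otimes h \mapsto g \otimes g^{-1}h$ exchanging the diagonal and first-factor actions on $\mathbb{Z}[G] \otimes \mathbb{Z}[G]$, and acyclicity follows from the K\"unneth formula applied to the free $\mathbb{Z}$-modules $\mathcal{G}_n$. By the comparison theorem, any $G$-equivariant chain map $D \colon \mathcal{G} \to \mathcal{G} \otimes \mathcal{G}$ lifting $\id_{\mathbb{Z}}$ represents the cup product via $[a][b] = [(a \otimes b) \circ D_{p,q}]$, and different choices of $D$ yield cohomologous cocycles.

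The crucial step is to exhibit one such $D$ whose $(p,q)$-component satisfies
\[\bigl((\varepsilon \otimes \id) \otimes (\varepsilon \otimes \id)\bigr) \circ D_{p,q} = \varepsilon \otimes \id \colon \mathcal{G}_{p+q} \to K^{\otimes(p+q)}.\]
Once this is done, the factorization $a \otimes b = (\hat a \otimes \hat b) \circ \bigl((\varepsilon \otimes \id) \otimes (\varepsilon \otimes \id)\bigr)$ immediately yields $(a \otimes b) \circ D_{p,q} = (\hat a \otimes \hat b) \circ (\varepsilon \otimes \id) = c$, which is the formula in the statement. For the construction, I would set $D_0(g) = g \otimes g$ and in higher degrees begin from the $G$-equivariant ansatz
\[\tilde D_{p,q}(g \otimes x_1 \otimes \cdots \otimes x_n) = (g \otimes x_1 \otimes \cdots \otimes x_p) \otimes (g \otimes x_{p+1} \otimes \cdots \otimes x_n),\]
which satisfies the projection identity but is not a chain map; corrections absorbing its chain-map defect can be taken in the kernel of $(\varepsilon \otimes \id) \otimes (\varepsilon \otimes \id)$, which is consistent because this projection annihilates the differential of $\mathcal{G} \otimes \mathcal{G}$.

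I expect the main obstacle to be checking that such corrections actually exist, i.e., that one can simultaneously maintain $G$-equivariance, the chain map condition, and the projection identity throughout the inductive extension of $D$; this amounts to an obstruction computation in the Hom complex between $\mathcal{G}$ and the kernel subcomplex of $(\varepsilon \otimes \id) \otimes (\varepsilon \otimes \id)$, carried out using the acyclicity of $\mathcal{G} \otimes \mathcal{G}$ provided by the K\"unneth formula.
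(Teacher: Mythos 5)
Your reduction of the statement to the existence of a $G$-equivariant chain map $D\colon\mathcal{G}\to\mathcal{G}\otimes\mathcal{G}$ lifting $\id_{\mathbb{Z}}$ and satisfying $\bigl((\varepsilon\otimes\id)\otimes(\varepsilon\otimes\id)\bigr)\circ D_{p,q}=\varepsilon\otimes\id$ is correct, as is your observation that the right-hand side of the proposition is a cocycle because $\operatorname{im}\partial=K^{\otimes(p+1)}=\ker(\varepsilon\otimes\id)$. But the proof is not complete: the existence of such a $D$ is exactly the content of the proposition, and your argument for it has a genuine gap. You propose to correct the (non-chain-map) ansatz $\tilde D$ by terms lying in the subcomplex $Z_*=\ker\bigl((\varepsilon\otimes\id)\otimes(\varepsilon\otimes\id)\bigr)$, and the inductive step requires the defect cycle $D_{n-1}d-d\tilde D_n$ (which does land in $Z_{n-1}$) to be a \emph{boundary of an element of $Z_n$}. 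This is not automatic. From the short exact sequence of complexes $0\to Z_*\to\mathcal{G}\otimes\mathcal{G}\to Q_*\to 0$, where $Q_n=\bigoplus_{p+q=n}K^{\otimes p}\otimes K^{\otimes q}$ carries the zero differential (precisely because the projection annihilates $d$), the long exact sequence gives $H_{n-1}(Z)\cong H_n(Q)=Q_n\neq 0$. So $Z_*$ is far from acyclic, the obstruction you defer to lives in a nonzero group, and its vanishing is a real computation that you have not carried out. As written, the "crucial step" is asserted rather than proved.

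The paper avoids this issue entirely by working with the bar resolution $\mathcal{F}$: it exhibits the explicit chain map $\varphi_p(x_0,\dots,x_p)=x_0\otimes(x_1-x_0)\otimes\cdots\otimes(x_p-x_{p-1})$ from $\mathcal{F}$ to $\mathcal{G}$, composes the Alexander--Whitney diagonal on $\mathcal{F}$ with $\varphi\otimes\varphi$ to get a diagonal approximation valued in $\mathcal{G}\otimes\mathcal{G}$, and checks by a one-line evaluation that $(a\otimes b)\circ(\varphi\otimes\varphi)\circ\Delta=(\hat a\otimes\hat b)\circ(\varepsilon\otimes\id)\circ\varphi$; since $\varphi^*$ is the identity on cohomology, the claim follows. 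If you want to salvage your route, the cleanest fix is essentially to take $D=(\varphi\otimes\varphi)\circ\Delta\circ\psi$ for a chain homotopy inverse $\psi$ of $\varphi$ --- at which point you have reproduced the paper's comparison argument. Otherwise you must actually compute the obstruction class in $Q_n$ and show it vanishes.
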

	
	\begin{proof}
		Denote by $\mathcal{F}$ the standard resolution of $\mathbb{Z}$ as a $G$-module and consider a map $\varphi\colon \mathcal{F}\to\mathcal{G}$ defined by $\varphi_p\colon\mZ[G^{p+1}]\to \mZ[G]\otimes K^{\otimes p}$, \[\varphi_p(x_0,x_1\ldots,x_p)=x_0\otimes(x_1-x_0)\otimes\cdots\otimes(x_p-x_{p-1}).\] The combinatorics in the proof of \cite[Lemma 3.1]{FarberMes} show that this is in fact a chain map. The result follows from the commutativity of the following $G$-module diagram
		\[\begin{tikzcd}
			\mathcal{F}_{p+q}\ar[r,"\Delta"]\ar[d,"\varphi"']&(\mathcal{F}\otimes\mathcal{F})_{p+q}\ar[r,"\varphi\otimes\varphi"]&(\mathcal{G}\otimes\mathcal{G})_{p+q}\ar[d, "a\otimes b"]\\
			\mathcal{G}_{p+q}\ar[r,"\varepsilon\otimes\id"']&K^{\otimes p+q}\ar[r,"\hat{a}\otimes\hat{b}"']&A\otimes B,
		\end{tikzcd}\]
		where $\Delta$ denotes the Alexander--Whitney diagonal map, see \cite[p. 108]{Brown82}, and the action on tensor products is diagonal.
	\end{proof}
	
	Consider a permutation $G$-module $\mathbb{Z}[G/H]$ and write $I$ for the kernel of the augmentation homomorphism $\mathbb{Z}[G/H] \rightarrow \mathbb{Z}$, given by $gH \mapsto 1$ for any $gH \in G/H$. Define a $G$-module homomorphism $ \xi \colon \mathbb{Z}[G]\otimes K \rightarrow I $ as the composition of $\varepsilon \otimes \textrm{id}$ and the map $\mu\colon K \rightarrow I$ induced by the canonical projection $G \rightarrow G/H$ in 
	\[ \begin{tikzcd} 
		K\ar[r]\ar[d]&\mathbb{Z}[G]\ar[r, "\varepsilon"] \ar[d]&\mathbb{Z}\ar[d, "="]\\
		I\ar[r]&\mathbb{Z}[G/H]\ar[r]&\mathbb{Z}.
	\end{tikzcd} \] This is obviously a cocycle, and thus it represents a one-dimensional cohomology class $\omega \in H^1(G,I)$, which will be called the \emph{Berstein class of $G$ relative to $H$}. By Proposition \ref{pro:DescriptCup}, its $n$-th power $\omega^n\in H^n(G,I^{\otimes n})$ is represented by the map \[\mZ[G]\otimes K^{\otimes n}\xrightarrow{\varepsilon\otimes\id} K^{\otimes n}\xrightarrow{\mu^{\otimes n}} I^{\otimes n}.\]
	
	\begin{lemma}
		The class $\omega$ defined as above is a zero-divisor, i.e. $$\omega \in \ker\!\big[H^1(G, I) \to H^1(H,I)\big].$$
	\end{lemma}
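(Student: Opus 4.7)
The plan is to recognise $\omega\in H^1(G,I)$ as the image of $1\in H^0(G,\mathbb{Z})=\mathbb{Z}$ under the connecting homomorphism $\delta_G$ associated to the short exact sequence of $G$-modules
\[0\to I\to \mathbb{Z}[G/H]\xrightarrow{\varepsilon'}\mathbb{Z}\to 0,\]
with $\varepsilon'(gH)=1$, and then to exploit naturality of $\delta$ under restriction to $H$.

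To verify $\omega=\delta_G(1)$, I would compute on the resolution $\mathcal{G}$: the class $1\in H^0(G,\mathbb{Z})$ is represented by $\varepsilon\colon\mathbb{Z}[G]\to\mathbb{Z}$, which lifts along $\varepsilon'$ to the canonical projection $\pi\colon\mathbb{Z}[G]\to\mathbb{Z}[G/H]$, $g\mapsto gH$. The coboundary $\pi\circ d_1\colon\mathbb{Z}[G]\otimes K\to\mathbb{Z}[G/H]$ then takes values in $I$ (since $\pi$ lifts $\varepsilon$), and a brief inspection on generators $g\otimes k$ identifies it with $\xi=\mu\circ(\varepsilon\otimes\id)$. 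Next, because $\mathbb{Z}[G]$ is free over $\mathbb{Z}[H]$ (with basis any set of left coset representatives of $H$ in $G$), the complex $\mathcal{G}$ remains a free $\mathbb{Z}[H]$-resolution of $\mathbb{Z}$. Consequently, restriction $\mathrm{res}^G_H$ is realised on the very same complex by regarding a $\mathbb{Z}[G]$-linear cocycle as merely $\mathbb{Z}[H]$-linear, and therefore commutes with the connecting homomorphisms of the short exact sequence above. Thus $\mathrm{res}^G_H(\omega)=\delta_H(1)$, where $\delta_H$ is the analogous connecting in the $H$-cohomology long exact sequence.

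It remains to observe that $\delta_H(1)=0$: the trivial coset $eH\in(G/H)^H$ is an $H$-invariant element of $\mathbb{Z}[G/H]$ with $\varepsilon'(eH)=1$, so $\mathbb{Z}[G/H]^H\to\mathbb{Z}^H$ is already surjective in degree zero and $\delta_H$ vanishes there. The only mildly technical point is the identification $\omega=\delta_G(1)$ on the non-standard resolution $\mathcal{G}$; once that is in place, the rest is formal. (Alternatively, one can bypass the long exact sequence entirely by unwinding the above to produce the explicit $H$-equivariant primitive $\phi\colon\mathbb{Z}[G]\to I$, $g\mapsto gH-eH$, which satisfies $\phi\circ d_1=\xi$.)
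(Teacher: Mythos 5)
Your argument is correct, and its conceptual core is genuinely different from (if ultimately equivalent to) the paper's. The paper's proof is a single line: it exhibits the explicit $H$-equivariant cochain $\mathbb{Z}[G]\to I$, $g\mapsto gH-H$, and observes that its coboundary is $\xi$ --- precisely the primitive you mention parenthetically at the end. You instead identify $\omega$ as the connecting image $\delta_G(1)$ of the extension $0\to I\to\mathbb{Z}[G/H]\to\mathbb{Z}\to 0$ and note that this extension splits over $\mathbb{Z}[H]$ (the $H$-fixed coset $eH$ gives an $H$-equivariant splitting $\mathbb{Z}\to\mathbb{Z}[G/H]$), so all connecting maps, in particular $\delta_H$, vanish after restriction; naturality of $\delta$ under restriction, which you correctly justify by noting that $\mathcal{G}$ is simultaneously a free $\mathbb{Z}[H]$-resolution, then kills $\mathrm{res}^G_H(\omega)$. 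Each step checks out: $\varepsilon$ lifts to the projection $\pi\colon g\mapsto gH$, and $\pi\circ d_1(x\otimes k)=\varepsilon(x)\mu(k)=\xi(x\otimes k)$, so $\omega=\delta_G(1)$ as claimed. What your route buys is structural insight: the $H$-splitting you use is exactly the $(G,H)$-exactness of this sequence that the paper exploits later when building the $(G,H)$-projective resolution, and your argument shows at once that the whole sequence becomes cohomologically invisible over $H$, not just in degree one. What it costs is length --- the paper's direct primitive is the same object you would extract by unwinding your lift, obtained with no machinery.
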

	\begin{proof}
		The $H$-module homomorphism $\mathbb{Z}[G] \to I$ defined by $g\mapsto gH-H$ shows that $\xi$ considered as an $H$-module homomorphism is a coboundary.
	\end{proof}

	In the particular case of $G = \pi \times \pi$ and $H = \Delta_{\pi}$ is the diagonal subgroup of $\pi \times \pi$, this class coincides with the ``canonical $\tc$-class'' introduced by Costa and Farber \cite{Costa}. This follows from the fact that $(\pi \times \pi)/\Delta_{\pi}$ and $\pi$ seen as a $(\pi \times \pi)$-set via the action $(g,h)x = gxh^{-1}$ are isomorphic as $(\pi\times \pi)$-sets.\\

	Berstein in \cite{Bernstein} showed that for a connected $CW$-complex $X$ of dimension $n \geq 3$, $\cat(X) = n$ if and only if there exists a class $u \in H^1(X,K)$ such that $u^n \neq 0$ in $H^n(X,K^{\otimes n})$, where $K$ is the augmentation ideal of $\mathbb{Z}[\pi_1(X)]$. Costa and Farber gave a version of this result for $\tc$ (\cite[Theorem 7]{Costa}). Here, we state an analogue of their result illustrating the relation between $\secat(H\hookrightarrow G)$ and the Berstein canonical class of $G$ relative to $H$.  
	
	\begin{theorem}\label{th:GenerCostaFarber}
		If $n = \cd\,G \geq 3$, then $\secat(H \hookrightarrow G) \leq n-1$ if and only if $\omega^n = 0$.
	\end{theorem}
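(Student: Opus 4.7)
The plan is to treat the two implications separately, all the difficulty being in the converse. The forward direction ($\secat(H\hookrightarrow G)\leq n-1\Rightarrow\omega^n=0$) is immediate from Schwarz's classical zero-divisor cup-length lower bound for sectional category: by the preceding lemma, $\omega$ lies in $\ker\!\big[H^1(G,I)\to H^1(H,I)\big]$, so any $n$-fold cup product of $\omega$ must vanish in $H^n(G,I^{\otimes n})$ whenever $\secat(H\hookrightarrow G)\leq n-1$.

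For the converse, I would translate the problem via Theorem~\ref{thm:secat_FGLO_char}: since $*^\infty(G/H)$ is a model for $E_{\fH}G$ whose $(n-1)$-skeleton is (equivariantly) $*^n(G/H)$, we have $\secat(H\hookrightarrow G)\leq n-1$ if and only if there exists a $G$-equivariant map $EG\to *^n(G/H)$. The join $*^n(G/H)$ is $(n-2)$-connected with $\pi_{n-1}\bigl(*^n(G/H)\bigr)\cong I^{\otimes n}$ as $G$-modules; the hypothesis $n\geq 3$ makes it simply connected, so equivariant obstruction theory applies with untwisted coefficients. Choosing a $G$-CW model of $EG$ of dimension $n=\cd\,G$, the connectivity of $*^n(G/H)$ yields a $G$-equivariant cellular map on the $(n-1)$-skeleton without obstruction; the sole obstruction to extending over the top cells is a class $\theta\in H^n(G,I^{\otimes n})$, and since $\cd\,G=n$ there are no higher obstructions. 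Thus the desired $G$-equivariant map exists precisely when $\theta=0$.

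The main obstacle is the identification $\theta=\omega^n$. I would proceed in two steps. First, a direct chain-level computation with the resolution $\mathcal{G}$---together with the description of $\tilde{H}_{n-1}\bigl(*^n(G/H)\bigr)\cong I^{\otimes n}$ via the Künneth/join formula---shows that the primary obstruction to a $G$-equivariant map $EG\to *^1(G/H)=G/H$ is represented by the cocycle $\xi\colon\mathbb{Z}[G]\otimes K\to I$, hence equals $\omega$. Second, I would invoke the general principle of obstruction theory that the primary obstruction of the $n$-fold fibrewise join of a fibration is the $n$-fold cup product of the original primary obstruction; combined with the identification of $p_{n-1}$ as an iterated fibrewise join from the proof of Lemma~\ref{lemma:secat_FGLO_char_aux} and the representation of $\omega^n$ furnished by Proposition~\ref{pro:DescriptCup}, this yields $\theta=\omega^n$ and finishes the proof. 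This fibrewise-join formula for primary obstructions, and its compatibility with the explicit chain-level cocycle representatives coming from $\mathcal{G}$, is the delicate point requiring careful bookkeeping; it is the analogue of the central step in Costa--Farber's treatment of the topological complexity case.
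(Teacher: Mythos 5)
Your forward direction is the same as the paper's: Schwarz's kernel-nilpotency lower bound applied to the zero-divisor $\omega$. For the converse, however, you take a genuinely different route. The paper never identifies the obstruction class with $\omega^n$. Instead it observes that the single obstruction to compressing $\rho\colon EG\to E_{\langle H\rangle}G$ into the $(n-1)$-skeleton lies, by naturality of the Hurewicz isomorphism, in the image of the principal-component evaluation $\rho^n\colon H^n_{\langle H\rangle}(G,\underline{M})\to H^n(G,\underline{M}(G/e))$; since $\omega^n=0$ forces $\height(\omega)\le n-1$, Proposition~\ref{prop:heightGtRho} gives $\rho_{\langle H\rangle}\le n-1$, so $\rho^n$ vanishes in degree $n$ and the obstruction dies without ever being computed. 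You instead compute the obstruction: reducing via Lemma~\ref{lemma:secat_FGLO_char_aux} (and the Husemoller correspondence) to the existence of a $G$-map $EG\to *^{n}(G/H)$, you identify the sole obstruction $\theta\in H^n(G,I^{\otimes n})$ with $\omega^n$ by combining the chain-level identification of the degree-one primary obstruction with $\xi$ and Schwarz's fibrewise-join formula for primary obstructions. This is precisely the Costa--Farber argument, and it is sound: the join formula is a theorem of Schwarz, $\pi_{n-1}\bigl(*^n(G/H)\bigr)\cong I^{\otimes n}$ is the standard join/K\"unneth computation, and the hypothesis $n\ge 3$ enters exactly where you invoke it (Eilenberg--Ganea to get an $n$-dimensional model of $EG$, and simple connectivity of the join). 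What the paper's detour through Bredon cohomology buys is the avoidance of the ``delicate bookkeeping'' you flag: the universality of the Bredon class, packaged in Proposition~\ref{prop:heightGtRho}, replaces the join formula entirely. What your route buys is a self-contained classical argument, independent of the Bredon-theoretic machinery, at the cost of having to carry out (or carefully cite) the compatibility of the join formula with the explicit cocycle representatives of Proposition~\ref{pro:DescriptCup}.
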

	
	We postpone the proof to the end of this article.
	
	\section{Adamson cohomology and sectional category}
	
	In this section we  briefly review Adamson cohomology, a theory first introduced by Adamson \cite{Adamson} for finite groups. Later Hochschild \cite{Hoch} generalized the ideas of Adamson to develop a homological algebra theory in the relative setting. Then we proceed to recast Adamson cohomology in terms of equivariant Bredon cohomology. 
	
	\subsection{Review of the Adamson cohomology theory}
	
	Recall that an exact sequence of $G$-modules
	$$ \cdots \rightarrow M_{i} \xrightarrow{f_{i +1}} M_{i} \xrightarrow{f_{i}} M_{i-1} \rightarrow \cdots \ $$ 
	is said to be \textit{$(G,H)$-exact} provided that $\ker f_i$ is a direct summand of $M_i$ as an $H$-module for each $i$. A $G$-module P is said to be \textit{$(G,H)$-projective} provided that for every short $(G,H)$-exact sequence of modules $M \xrightarrow{f} N \rightarrow 0$ and every $G$-homomophism $g\colon P \rightarrow N$, there exists a $G$-homomorphism $h\colon P \rightarrow M$ making the diagram 
	$$\begin{tikzcd} & P \arrow[ld, "h"', dashrightarrow] \arrow[d, "g"]\\
		M \arrow[r, "f"] & N \arrow[r] & 0
	\end{tikzcd}$$
	commutative. Finally, given a $G$-module $M$, a \textit{$(G,H)$-projective resolution} of $M$ is an $(G,H)$-exact sequence of $G$-modules
	$$ \cdots \rightarrow P_n \rightarrow P_{n-1} \rightarrow \cdots \rightarrow P_0 \rightarrow M \rightarrow 0 $$ 
	such that $P_i$ is $(G,H)$-projective for each $i \geq 0$.  
	
	\begin{example}
		Given $n \geq 0$, define $C_n(G/H)$ to be the permutation module $\mathbb{Z}\big[(G/H)^{n+1}\big]$, where $(G/H)^{n+1}$ is equipped with the diagonal $G$-action, i.e 
		$$ g(g_0H, \ldots, g_nH) = (gg_0H, \ldots, gg_nH)$$ 
		
		Furthermore, let $d_n \colon C_n(G/H) \to C_{n-1}(G/H)$ be given by 
		$$d_n(g_0H, \ldots, g_nH) = \sum_{i=0}^n (-1)^i (g_0H, \ldots, \widehat{g_nH}, \ldots, g_nH),$$ 
		where $\widehat{g_nH}$ means that the element $g_nH$ is removed from the tuple. 
		Hochschild \cite{Hoch} proved that $(C, d)$ forms a $(G,H)$-projective resolution of the trivial $G$-module $\mathbb{Z}$, with the augmentation map defined by sending every coset to $1$. This resolution will be called the \emph{standard resolution of $G$ relative to $H$}.
	\end{example}
	
	In an analogous way to the non-relative case, Hochschild defined the relative extension functor as 
	$$ \Ext^n_{(G,H)}(M,N) = H^n\big(\Hom_{G}(\mathcal{P}_*,N)\big), $$ 
	where $M$ and $N$ are $G$-modules, and $\mathcal{P}_*$ is a $(G,H)$-projective resolution of $M$. Then the \textit{Adamson cohomology} of $G$ with respect to $H$ with coefficients in a $G$-module $M$ is 
	$$H^*\big([G:H],N\big) = \Ext^{*}_{(G,H)}(\mathbb{Z},N), $$ 
	where $\mathbb{Z}$ is the trivial $G$-module. The \emph{Adamson cohomological dimension} of $G$ relative to $H$, defined as the length of the shortest possible $(G,H)$-projective resolution of $\mathbb{Z}$, will be denoted by $\cd\,[G:H]$. This number can be equivalently characterized as the maximal integer $n \geq 0$ such that $H^n\big([G:H], M\big) \neq 0$ for some $G$-module $M$, as in spirit of \cite[Chapter 4, Lemma 4.1.6]{Weibel}.
	
	Note that the Adamson relative cohomology defined like this can be seen as a particular case of the cohomology of a permutation representation, with $G/H$ as the base $G$-set, see Blowers \cite{Blowers}.
	
	\subsection{Adamson canonical class and its universality}

	Consider the short exact sequence of $G$-modules,
	$$ 0 \rightarrow I \hookrightarrow \mathbb{Z}[G/H] \xrightarrow{\varepsilon} \mathbb{Z} \rightarrow 0. $$ 
	Tensoring it over $\mathbb{Z}$ with $I^{\otimes k-1}$, the $k-1$-fold tensor power of $I$ over $\mathbb{Z}$ seen as a $G$-module via the diagonal $G$-action, yields another short exact sequence:
	$$ 0 \rightarrow I^{\otimes k} \hookrightarrow \mathbb{Z}[G/H]\otimes I^{\otimes k-1} \xrightarrow{\varepsilon \otimes \textrm{id}} I^{\otimes k-1} \rightarrow  0. $$ 
	Splicing all those sequences together for varying $k$ yields an exact sequence
	$$ \cdots \rightarrow \mathbb{Z}[G/H] \otimes I^{\otimes k} \rightarrow \mathbb{Z}[G/H] \otimes I^{\otimes k-1} \rightarrow \cdots \rightarrow \mathbb{Z}[G/H] \rightarrow \mathbb{Z} \rightarrow 0. $$
	This is a $(G,H)$-projective resolution. To see $(G,H)$-exactness, note the decomposition as an $H$-module
	$$ \mathbb{Z}[G/H] \otimes I^{\otimes k} \cong (\mathbb{Z} \otimes I^{\otimes k-1}) \oplus (I \otimes I^{\otimes k-1}). $$ 
	In order to see projectiveness, define the inverse maps 
	$$\alpha\colon \mathbb{Z}[G/H] \otimes I  \rightarrow \mathbb{Z}[G] \otimes_H I, \textnormal{ $\alpha(\overline{x} \otimes y) = (x \otimes x^{-1}y) $} $$ and 
	$$ \beta\colon \mathbb{Z}[G] \otimes_H I \rightarrow \mathbb{Z}[G/H] \otimes I, \textnormal{ $\beta(x\otimes y) = \overline{x} \otimes xy$}.$$
	We define the $G$-action on $\mathbb{Z}[G] \otimes_H I$ such that is compatible with the diagonal one in $\mathbb{Z}[G/H] \otimes I$. As such, define the $G$-module structure on $\mathbb{Z}[G] \otimes_H I$ by the action $$g (x \otimes y) = \alpha(g (\overline{x} \otimes xy)).$$ As a consequence, we have
	$$g (x \otimes y) = \alpha(g (\overline{x} \otimes xy)) = \alpha(\overline{gx} \otimes gxy) = gx \otimes y, $$
	and we see that the action restricts to the first component. Then we use \cite[Lemma 2]{Hoch} and thus, generalizing this morphism to $\mathbb{Z}[G/H] \otimes I^{\otimes n}$ for every $n > 0$, we have that every term in the exact sequence constructed above is $(G,H)$-projective.
	
	The $(G,H)$-projective resolution above lets us define a cup product on Adamson cohomology as in Proposition \ref{pro:DescriptCup}.
	
	\begin{definition}
		Let $[a]\in H^p([G:H],A)$ and $[b]\in H^q([G:H],B)$ be cohomology classes represented by cocycles $a\colon \mathbb{Z}[G/H]\otimes I^{\otimes p}\to A$ and $b\colon\mathbb{Z}[G/H]\otimes I^{\otimes q}\to B$. Define the \emph{cup product} $[a][b]\in H^{p+q}([G:H],A\otimes B)$ as the class represented by the map \[\mathbb{Z}[G/H]\otimes I^{\otimes p+q}\xrightarrow{\varepsilon\otimes\id} I^{\otimes p+q}\xrightarrow{\hat{a}\otimes\hat{b}} A\otimes B.\]
	\end{definition}
	
	It is easy to check that this product verifies the properties \emph{dimension} $0$, \emph{naturality with respect to coefficient homomorphisms}, \emph{compatibility with} $\delta$, \emph{associativity} and \emph{commutativity} analogous to the ones in \cite[Pg. 110]{Brown82}.

	\begin{definition}
		The \emph{Adamson canonical class} $\phi \in H^1\big([G:H], I\big)$ is the class represented by the cocycle $\mathbb{Z}[G/H] \otimes I \xrightarrow{\varepsilon \otimes id} I$. Also, $\height(\phi)$ is the largest $n \geq 0$ such that $$\phi^n \in H^n([G:H],I^{\otimes n})$$ is nonzero.
	\end{definition}
	
	The Adamson canonical class is universal in the following sense.
	
	\begin{proposition}\label{prop:universality}
		For any $G$-module $M$ and any class $\lambda \in \text{H}^n\big([G:H],M\big)$ there exists a $G$-homomorphism $h\colon I^{\otimes n} \rightarrow M$ such that $h^*(\phi^n) = \lambda$.
	\end{proposition}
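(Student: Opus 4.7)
The plan is to show that any Adamson $n$-cocycle already factors through $I^{\otimes n}$, and that the resulting factorization is the desired $h$. Concretely, represent $\lambda$ by a $G$-module cocycle $a \colon \mathbb{Z}[G/H] \otimes I^{\otimes n} \to M$. The differential $d_{n+1} \colon \mathbb{Z}[G/H] \otimes I^{\otimes n+1} \to \mathbb{Z}[G/H] \otimes I^{\otimes n}$ of the resolution introduced above is, by construction, the composition of $\varepsilon \otimes \id$ with the inclusion $I^{\otimes n+1} \hookrightarrow \mathbb{Z}[G/H] \otimes I^{\otimes n}$ coming from the short exact sequence $0 \to I^{\otimes n+1} \to \mathbb{Z}[G/H] \otimes I^{\otimes n} \xrightarrow{\varepsilon \otimes \id} I^{\otimes n} \to 0$. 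The cocycle condition $a \circ d_{n+1} = 0$ together with surjectivity of $\varepsilon \otimes \id$ then forces $a$ to vanish on the submodule $I^{\otimes n+1}$. Consequently $a$ factors through $\varepsilon \otimes \id$, yielding a $G$-homomorphism $\hat{a} \colon I^{\otimes n} \to M$ with $a = \hat{a} \circ (\varepsilon \otimes \id)$; we will take $h = \hat{a}$.

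It remains to identify $h^*(\phi^n)$ with $\lambda$. First, observe that the reduction $\hat{\phi} \colon I \to I$ of $\phi$ (with respect to the sequence $0 \to I^{\otimes 2} \to \mathbb{Z}[G/H] \otimes I \to I \to 0$) is simply the identity. Applying the cup product formula from the preceding definition inductively, one sees that $\phi^n \in H^n([G:H], I^{\otimes n})$ is represented by the cocycle $\varepsilon \otimes \id \colon \mathbb{Z}[G/H] \otimes I^{\otimes n} \to I^{\otimes n}$. Therefore $h^*(\phi^n)$ is represented by $h \circ (\varepsilon \otimes \id) = \hat{a} \circ (\varepsilon \otimes \id) = a$, which represents $\lambda$, as required.

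The only step demanding any care is verifying that the image of $d_{n+1}$ really coincides with $I^{\otimes n+1}$ sitting inside $\mathbb{Z}[G/H] \otimes I^{\otimes n}$ via the canonical inclusion, and that the induced map $\hat{a}$ is automatically $G$-equivariant; both are immediate from the splicing construction of the resolution and the fact that $\varepsilon \otimes \id$ is a $G$-equivariant epimorphism.
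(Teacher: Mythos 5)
Your proof is correct and follows essentially the same route as the paper's: represent $\lambda$ by a cocycle $a$, take $h=\hat{a}$ (the factorization through $\varepsilon\otimes\id$, which the paper's hat notation encodes), and use that $\phi^n$ is represented by $\varepsilon\otimes\id$. You simply make explicit two points the paper leaves implicit, namely that the cocycle condition is what guarantees $\hat{a}$ exists, and the inductive computation of a representative for $\phi^n$.
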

	
	\begin{proof}
		Let $f\colon \mZ[G/H]\otimes I^{\otimes n}\rightarrow M$ be a cocycle representing the class $\lambda \in H^n([G:H],M)$. By the definition of the cup product in Adamson cohomology, the class $\phi^n$ is represented by \[\mZ[G/H]\otimes I^{\otimes n}\xrightarrow{\varepsilon\otimes \id}I^{\otimes n}.\]Taking $h=\hat{f}$ we see that $h^*(\phi^n)=\lambda$.
	\end{proof}

	\begin{corollary}
		Let $\phi \in H^1([G:H],I)$ be the Adamson canonical class, we have that
		$$ \cd\,[G:H] = \height(\phi).$$
	\end{corollary}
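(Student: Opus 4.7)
The plan is to prove the two inequalities $\height(\phi)\leq \cd[G:H]$ and $\cd[G:H]\leq \height(\phi)$ separately, the first being essentially immediate from the alternative characterization of $\cd[G:H]$ as the largest $n$ with $H^n([G:H],M)\neq 0$ for some $G$-module $M$, and the second being a direct consequence of the universality property established in Proposition~\ref{prop:universality}.

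For the inequality $\height(\phi)\leq \cd[G:H]$, I would simply observe that if $\phi^n\neq 0$, then by definition $\phi^n$ is a nonzero element of $H^n([G:H],I^{\otimes n})$, so $H^n([G:H],I^{\otimes n})\neq 0$. Taking $M=I^{\otimes n}$ in the characterization of Adamson cohomological dimension already recalled in the text (in the spirit of \cite[Chapter 4, Lemma 4.1.6]{Weibel}) gives $n\leq \cd[G:H]$. Maximizing over such $n$ yields $\height(\phi)\leq \cd[G:H]$.

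For the reverse inequality $\cd[G:H]\leq \height(\phi)$, I would argue contrapositively: suppose $n>\height(\phi)$, so that $\phi^n=0$. For every $G$-module $M$ and every class $\lambda\in H^n([G:H],M)$, Proposition~\ref{prop:universality} produces a $G$-homomorphism $h\colon I^{\otimes n}\to M$ such that $\lambda=h^*(\phi^n)=h^*(0)=0$. Therefore $H^n([G:H],M)=0$ for every $G$-module $M$, which by the same characterization of $\cd[G:H]$ forces $n>\cd[G:H]$. Combining with the previous paragraph we get the desired equality.

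I do not anticipate any real obstacle here, since the heavy lifting has already been done in Proposition~\ref{prop:universality}; the remaining content is essentially the standard observation, familiar from the Bernstein--Schwarz theory of the canonical class in Lusternik--Schnirelmann category, that the existence of a universal cohomology class of dimension $n$ reduces the question of non-vanishing of $H^n$ with arbitrary coefficients to the non-vanishing of a single class with fixed coefficients. The only minor care required is to treat the $\cd[G:H]=\infty$ case uniformly, but since both inequalities are phrased in terms of quantifiers over $n$, this is automatic.
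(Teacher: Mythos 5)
Your proof is correct and follows exactly the route the paper intends: the paper states this as an immediate corollary of Proposition \ref{prop:universality} together with the characterization of $\cd\,[G:H]$ as the maximal $n$ with $H^n([G:H],M)\neq 0$, which is precisely the two-inequality argument you give.
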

	
	It is possible to characterize Adamson cohomology groups in terms of zero divisors for certain coefficient systems.
	
	\begin{proposition}\label{prop:Adamson_zerodiv}
		For any $G$-module $M$ and $n\ge 1$, we have
		\[H^n([G:H],M)=\ker \left[H^1(G,\Hom_{\mathbb{Z}}(I^{\otimes n-1},M))\to H^1(H,\Hom_{\mathbb{Z}}(I^{\otimes n-1},M))\right].\]
		In particular,
		\[H^1([G:H],M)=\ker \left[H^1(G,M)\to H^1(H,M)\right].\]
	\end{proposition}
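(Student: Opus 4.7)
The plan is a two-step argument: first settle the base case $n=1$ (the ``in particular'' statement), then reduce the general case to it by dimension shifting on the coefficient module.

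For the base case, apply $\Hom_{\mathbb{Z}}(-, M)$ to the short exact sequence $0 \to I \to \mathbb{Z}[G/H] \to \mathbb{Z} \to 0$, obtaining a short exact sequence of $G$-modules
\[ 0 \to M \to \Hom_{\mathbb{Z}}(\mathbb{Z}[G/H], M) \to \Hom_{\mathbb{Z}}(I, M) \to 0. \]
The middle term is the coinduced $G$-module $\Hom_H(\mathbb{Z}[G], M)$, so Shapiro's lemma identifies the associated long exact sequence as
\[ \cdots \to M^H \to \Hom_G(I, M) \to H^1(G, M) \xrightarrow{\mathrm{res}} H^1(H, M) \to \cdots, \]
where the indicated map is ordinary restriction (the key point being that the Shapiro isomorphism composed with the map induced by $M \to \Hom_{\mathbb{Z}}(\mathbb{Z}[G/H], M)$ is restriction, as can be seen from the adjunction-theoretic description of coinduction). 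Hence $\ker[H^1(G, M) \to H^1(H, M)]$ is the cokernel of $M^H \to \Hom_G(I, M)$. On the Adamson side, the $(G,H)$-projective resolution $\cdots \to \mathbb{Z}[G/H] \otimes I^{\otimes k} \to \cdots$ computes $H^1([G:H], M)$ as the cohomology of the complex $\Hom_G(\mathbb{Z}[G/H] \otimes I^{\otimes *}, M) \cong \Hom_H(I^{\otimes *}, M)$. A direct unwinding of the differential reveals that an $H$-linear map $\phi\colon I \to M$ is a $1$-cocycle if and only if it is $G$-equivariant, so $\ker d^1 = \Hom_G(I, M)$; moreover the coboundary $d^0\colon M^H \to \Hom_H(I, M)$ lands in $\Hom_G(I, M)$ and agrees with the map from the long exact sequence. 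This settles the base case.

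For the general case, the tensor-hom adjunction provides natural isomorphisms
\[ \Hom_H\!\bigl(I^{\otimes k}, \Hom_{\mathbb{Z}}(I^{\otimes n-1}, M)\bigr) \cong \Hom_H(I^{\otimes k+n-1}, M), \]
which, by a direct computation, intertwine the respective Adamson differentials (the splicing that defines the resolution is compatible with the adjunction). Thus the Adamson complex for coefficients $\Hom_{\mathbb{Z}}(I^{\otimes n-1}, M)$ is simply the $(n-1)$-fold shift of the one for $M$, yielding $H^1([G:H], \Hom_{\mathbb{Z}}(I^{\otimes n-1}, M)) \cong H^n([G:H], M)$. Applying the base case with coefficient module $N = \Hom_{\mathbb{Z}}(I^{\otimes n-1}, M)$ now gives the proposition.

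The most delicate point will be recognizing the Shapiro isomorphism's interaction with the map $M \to \Hom_{\mathbb{Z}}(\mathbb{Z}[G/H], M)$ as plain restriction; the identifications $\ker d^1 = \Hom_G(I, M)$ and the compatibility of the tensor-hom adjunction with the Adamson differentials are straightforward but concrete explicit verifications.
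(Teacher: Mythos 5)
Your proposal is correct, and it reaches the result by a slightly different route than the paper, so let me compare. The paper proves the statement for all $n$ in a single step: it applies $\Ext^*_G(-,M)$ to the short exact sequence $0\to I^{\otimes n}\to\mathbb{Z}[G/H]\otimes I^{\otimes n-1}\to I^{\otimes n-1}\to 0$, identifies $H^n([G:H],M)$ with $\Hom_G(I^{\otimes n},M)/\operatorname{im}\alpha$ via the correspondence $f\mapsto\hat f$ (this is exactly your observation that Adamson $n$-cocycles are the $G$-maps $I^{\otimes n}\to M$ and that $d^{n-1}$ lands in them), reads off from exactness that this cokernel is $\ker\gamma$ with $\gamma\colon\Ext^1_G(I^{\otimes n-1},M)\to\Ext^1_G(\mathbb{Z}[G/H]\otimes I^{\otimes n-1},M)$, and finally rewrites both $\Ext^1$ groups as $H^1$ of $G$ and of $H$ using $\Ext^1_G(A,M)\cong H^1(G,\Hom_{\mathbb{Z}}(A,M))$ and a Shapiro-type isomorphism $\Ext^1_G(\mathbb{Z}[G/H]\otimes I^{\otimes n-1},M)\cong\Ext^1_H(I^{\otimes n-1},M)$, under which $\gamma$ becomes restriction. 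Your base case $n=1$ is literally this argument in coefficient form (your long exact sequence for $0\to M\to\Hom_{\mathbb{Z}}(\mathbb{Z}[G/H],M)\to\Hom_{\mathbb{Z}}(I,M)\to 0$ is the paper's Ext sequence after the identification $\Ext^*_G(A,M)\cong H^*(G,\Hom_{\mathbb{Z}}(A,M))$), and the ``delicate point'' you flag --- that the Shapiro isomorphism turns the relevant map into restriction --- is exactly the point the paper also needs and asserts. What is genuinely different is your treatment of general $n$: instead of rerunning the Ext sequence at stage $n$ of the resolution, you prove the coefficient-shifting isomorphism $H^1\big([G:H],\Hom_{\mathbb{Z}}(I^{\otimes n-1},M)\big)\cong H^n([G:H],M)$ by tensor--hom adjunction on the cochain complexes $\Hom_H(I^{\otimes *},-)$ and reduce to the base case. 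This is correct (the adjunction does intertwine the differentials, since the Adamson differential acts diagonally on tensor factors), and it isolates the Shapiro input to degree one while producing a reusable shifting lemma --- the Adamson analogue of the classical $H^n(G,M)\cong H^1(G,\Hom_{\mathbb{Z}}(K^{\otimes n-1},M))$. The paper's version is marginally shorter because it never needs to compare two cochain complexes, only one long exact sequence per $n$.
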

	
	\begin{proof}
		Consider the short exact sequence $$ 0 \rightarrow I^{\otimes n} \rightarrow \mathbb{Z}[G/H] \otimes I^{\otimes n-1} \rightarrow I^{\otimes n-1} \rightarrow 0. $$ Applying the Ext functor we obtain the associated long exact sequence $$ 0 \rightarrow \Hom_G(I^{\otimes n-1},M) \rightarrow \Hom_G(\mathbb{Z}[G/H]\otimes I^{\otimes n-1},M) \xrightarrow{\varkappa} \Hom_G(I^{\otimes n},M) \xrightarrow{\nu} $$ $$ \Ext^1_G(I^{\otimes n-1},M) \xrightarrow{\gamma} \Ext^1_G(\mathbb{Z}[G/H]\otimes I^{\otimes n-1},M) \rightarrow \Ext^1_G(I^{\otimes n},M) \rightarrow \cdots $$ Through the correspondence $f \mapsto \hat{f}$, we get \[H^n([G:H],M) \cong \Hom_G(I^{\otimes n},M)/ \text{Im}(\varkappa).\] We also have by exactness the chain of isomorphisms $$\Hom_G(I^{\otimes n},M)/\text{Im}(\varkappa) \cong \Hom_G(I^{\otimes n},M)/\ker(\nu) \cong \text{Im}(\nu) \cong \ker(\gamma). $$ Moreover, a straightforward generalization of \cite[Lemma 5.4]{FarberMes} for the inclusion $H \hookrightarrow G$ gives us the isomorphism \[\Ext^1_G(\mathbb{Z}[G/H]\otimes I^{\otimes n-1},M) \cong \Ext^1_H(I^{\otimes n-1}, M), \] which is induced by the map \[ \Hom_{G}(\mathbb{Z}[G/H]\otimes I^{\otimes n-1},M) \rightarrow \Hom_{H}(I^{\otimes n-1},M) \] defined by associating to any $G$-homomorphism $f: \mathbb{Z}[G/H]\otimes I^{\otimes n-1} \rightarrow M$ the restriction $f_{|_{e \otimes I^{\otimes n-1}}}$ to $e \otimes I^{\otimes n-1} \subset \mathbb{Z}[G/H]\otimes I^{\otimes n-1}$. Finally, by \cite[Proposition III.2.2]{Brown82}, we have that $$\Ext^1_G(I^{\otimes n-1},M) \cong H^1(G,\Hom_{\mathbb{Z}}(I^{\otimes n-1},M))$$ and $$\Ext^1_H(I^{\otimes n-1},M) \cong H^1(H,\Hom_{\mathbb{Z}}(I^{\otimes n-1}, M)). $$ The action on $\Hom_{\mathbb{Z}}(I^{\otimes n-1},M)$ is defined by $(gf)(x) = g f(g^{-1}x)$ for any $g \in G$, $f \in \Hom_{\mathbb{Z}}(I^{\otimes n-1},M)$ and $x \in I^{\otimes n-1}$. Then, $\gamma$ becomes the restriction homomorphism.
	\end{proof}

	Consider the canonical map 
	$$ \rho\colon EG \rightarrow E_{\fH}G$$ 
	and the chain homotopy homomorphism between the cellular chain complex of $EG$ and the relative standard resolution of $G$ with respect to $H$ which corresponds to sending $g \mapsto gH$, its class in the coset space $G/H$. Now, after applying the functor $\Hom_G(\_,M)$, consider the induced maps on cohomology, which gives a map between the Adamson cohomology and the usual cohomology of the group $$ \rho^*\colon H^*([G:H],M) \rightarrow H^*(G,M). $$ Observe that, at the chain level, this map is induced by the projection $\mZ[G]\to\mZ[G/H]$ and therefore it respects product structures. The following result arises immediately from the definitions involved,  if we take $I$ as the coefficient module in the previous homomorphism. Nonetheless, it is relevant enough to be highlighted on its own:
	
	\begin{proposition}
		With $\rho^*$ defined as before, let $\phi$ and $\omega$ be the Adamson canonical class and the Berstein class relative to $H$, respectively. We have $$ \rho^*(\phi) = \omega. $$  
	\end{proposition}
	
	As a last remark, the naturality of $\rho^*$ with respect to change of coefficient system implies that $\text{Im }\rho^*$ corresponds to the \emph{essential classes} in the sense of \cite{FarberMes}.
	
	\subsection{A spectral sequence}
	
	We will make a brief introduction to the existence of a spectral sequence which contains information about both Adamson and usual cohomology. This sequence is derived from a much more general theory of relative homological algebra developed in \cite{EilenbergMoore65}. We will restrict here to our case of interest.
	
	Take the $(G,H)$-projective resolution of $\mathbb{Z}$  $$ \cdots \rightarrow \mathbb{Z}[G/H] \otimes I^{\otimes n} \rightarrow \cdots \rightarrow \mathbb{Z}[G/H]\otimes I \rightarrow \mathbb{Z}[G/H] \rightarrow \mathbb{Z} \rightarrow 0. $$ Looking at it as an object in the category of sequences of $G$-modules consider a $G$-projective resolution of it, which gives us a double complex
	
	$$ \begin{tikzcd}
		\cdots \arrow[r] & P_{i,j} \arrow[r] \arrow[d] & \cdots \arrow[r] & P_{i,0} \arrow[r] \arrow[d] & Q_i \arrow[r] \arrow[d] & 0 \\
		& \vdots \arrow[d] &  & \vdots \arrow[d]  & \vdots \arrow[d] &  \\
		\cdots \arrow[r] & P_{0,j} \arrow[r] \arrow[d] & \cdots \arrow[r]  & P_{0,0} \arrow[r] \arrow[d] & Q_0 \arrow[r] \arrow[d] & 0 \\
		\cdots \arrow[r] & \mathbb{Z}[G/H] \otimes I^{\otimes j} \arrow[r] & \cdots \arrow[r] & \mathbb{Z}[G/H] \arrow[r] & \mathbb{Z} \arrow[r] & 0 
	\end{tikzcd} $$ such that every $P_{i,j}$ is $G$-projective, every column is a $G$-projective resolution, and each row (except the first one) is split exact. Now, applying the functor $\Hom_G(-,M)$ for some choice of coefficient system $M$, we obtain another double complex and its associated spectral sequence. Let us have a  glance at the horizontal filtration. Given that every row above the first one is split exact we have that $\E_0^{p,q} = \Hom_G(P_{p,q},M)$ and $\E_1^{p,q} = 0$ for $q > 0$. Moreover, $\E_1^{p,0} \cong \Hom_G(Q_p,M)$. As we can see the spectral sequence collapses and, given that $Q_*$ is a projective resolution of $\mathbb{Z}$ as a trivial $G$-module, as stated before, it converges to $\Ext_G^{*}(\mathbb{Z},M) = H^*(G,M)$.
	
	The vertical filtration provides more information. Every column is a projective resolution, so the first page of the spectral sequence has the form $$\E^{p,q}_1  = \Ext_G^{q}(\mathbb{Z}[G/H] \otimes I^{\otimes p},M). $$  The differential on this page is the map $\E_1^{p,q} \xrightarrow{\overline{d_1}} \E_1^{p+1,q}$ induced by the original differential on the $(G,H)$-projective resolution, $$ \mathbb{Z}[G/H] \otimes I^{\otimes p+1} \xrightarrow{ \iota \circ \varepsilon \otimes \id} \mathbb{Z}[G/H] \otimes I^{\otimes p}. $$ Therefore, the second page of the spectral sequence corresponds to $$ \E_2^{p,q} = H^p(\Ext_G^{q}(\mathbb{Z}[G/H]\otimes I^{\otimes *},M)). $$ It is in this second page where, if we restrict to $q = 0$, Adamson cohomology appears. Indeed $ \E^{p,0}_2 = H^p([G:H],M) $ and we have
	
	\begin{proposition}
		There exists a spectral sequence $$\E^{p,q}_2 = H^p(\Ext_G^{q}(\mathbb{Z}[G/H]\otimes I^{\otimes *},M)) \Rightarrow H^{p+q}(G,M) $$ such that $\E^{p,0}_2 = H^p([G:H],M).$
	\end{proposition}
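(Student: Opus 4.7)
The plan is to formalize the construction already sketched in the paragraphs preceding the statement. The core object is a first-quadrant bicomplex $P_{\bullet,\bullet}$ of $G$-modules of Cartan--Eilenberg type relative to the $(G,H)$-projective resolution
\[\mathcal{R}_\bullet\colon\ \cdots\to\mathbb{Z}[G/H]\otimes I^{\otimes n}\to\cdots\to \mathbb{Z}[G/H]\to \mathbb{Z}\to 0\]
introduced in Section~2.2. I want each column $P_{\bullet,j}$ to be a $G$-projective resolution of $\mathbb{Z}[G/H]\otimes I^{\otimes j}$, each row $P_{i,\bullet}$ (augmented by an additional $G$-projective $Q_i$) to be split exact, and the resulting augmentation column $Q_\bullet$ to be a $G$-projective resolution of~$\mathbb{Z}$.

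The first step is to construct this bicomplex. This is the relative analogue of the standard Cartan--Eilenberg resolution: I would decompose $\mathcal{R}_\bullet$ into short $(G,H)$-exact sequences and then iteratively apply the horseshoe lemma in the category of $G$-modules, at each stage taking a direct-sum $G$-projective resolution that reflects the $H$-splittings automatically present in the $(G,H)$-exact sequences. In this way each row of $P_{\bullet,\bullet}$ can be arranged to split as a sequence of $G$-modules. A fully general treatment is available in \cite{EilenbergMoore65}. This is the main technical step, and the point where relative homological algebra enters nontrivially.

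Once the bicomplex is built, I would apply $\Hom_G(-,M)$ to obtain a first-quadrant cochain bicomplex $C^{p,q}=\Hom_G(P_{p,q},M)$, whose two standard spectral sequences both converge to the cohomology of the total complex. For the horizontal filtration, split exactness of the rows is preserved by any additive functor, so $\E_1^{p,q}=0$ for $q>0$ and $\E_1^{p,0}\cong \Hom_G(Q_p,M)$; since $Q_\bullet$ is a $G$-projective resolution of~$\mathbb{Z}$, this spectral sequence collapses at $\E_2$ with abutment $\Ext^*_G(\mathbb{Z},M)=H^*(G,M)$. Consequently the total complex computes $H^*(G,M)$, and this will be the abutment of the other spectral sequence as well.

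For the vertical filtration, exactness of each column yields $\E_1^{p,q}=\Ext^q_G(\mathbb{Z}[G/H]\otimes I^{\otimes p},M)$, and the $\E_1$-differential is induced by the horizontal differential of $\mathcal{R}_\bullet$, so
\[\E_2^{p,q}=H^p\bigl(\Ext^q_G(\mathbb{Z}[G/H]\otimes I^{\otimes \bullet},M)\bigr).\]
Restricting to $q=0$ and using $\Ext^0_G=\Hom_G$, the row $\E_2^{p,0}$ is precisely the cohomology of $\Hom_G(\mathcal{R}_\bullet,M)$, which is $H^p([G:H],M)$ by definition of Adamson cohomology. Combined with the convergence established above, this yields the required spectral sequence with abutment $H^{p+q}(G,M)$ and the claimed identification of the bottom row of the $\E_2$-page.
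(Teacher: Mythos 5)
Your proposal is correct and follows essentially the same route as the paper: the same relative Cartan--Eilenberg bicomplex over the $(G,H)$-projective resolution $\mathbb{Z}[G/H]\otimes I^{\otimes *}$, the same appeal to \cite{EilenbergMoore65}, and the same analysis of the two filtrations (rows split exact giving collapse to $H^*(G,M)$, columns projective giving the stated $\E_2$-page). The only difference is that you spell out the horseshoe-lemma construction of the bicomplex, which the paper leaves to the cited reference.
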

	
	We believe the study of this sequence and, especially, the identification of the second page with a manageable object is worthwhile in itself. The authors hope to obtain more information about it in future work.
	
	\subsection{Adamson vs Bredon cohomology}\label{sect:AdamsonvsBredon}
	
	We will now recast Adamson cohomology in terms of equivariant Bredon cohomology in order to reconcile our approach with that of Farber, Grant, Lupton and Oprea \cite{FGLO17}.
	
	Let us briefly review the definition of Bredon cohomology. Recall that the \textit{orbit category} of $G$ associated to a family $\mathcal{F}$ of subgroups of~$G$, written $\textrm{Or}_{\mathcal{F}}G$, is a category whose object are homogeneous $G$-spaces $G/K$ for $K \in \mathcal{F}$, and morphisms are $G$-equivariant maps between them. A $\textrm{Or}_{\mathcal{F}}G$-\textit{module} is a contravariant functor from $\textrm{Or}_{\mathcal{F}}G$ to the category of abelian groups. A $\textrm{Or}_{\mathcal{F}}G$-\textit{homomorphism} of such modules is a natural transformation. The category of $\textrm{Or}_{\mathcal{F}}G$-modules inherits the structure of an abelian category from the category of abelian groups; in particular, the notion of a projective $\textrm{Or}_{\mathcal{F}}G$-module is defined. If the family contains the trivial subgroup, the \emph{principal component} refers to evaluating the module or morphism in the $G/e$ component.
	
	\begin{example}
		Let $\mathcal{F}$ be a full family of subgroups of $G$. Given a $G$-CW complex~$X$ with isotropy groups in $\mathcal{F}$, define a $\textrm{Or}_{\mathcal{F}}G$-module $\underline{C}_n(X)$ as follows.
		\begin{itemize}
			\item $\underline{C}_n(X)(G/K) = C_n(X^K)$, where $C_n(X^K)$ denotes the group of cellular $n$-chains of $X^K = \{x \in X \,|\, kx = x \textnormal{ for any $k\in K$}\}$.
			\item If $\varphi \colon G/K \to G/L$ is a $G$-equivariant map, then $\varphi(gK) = gg_0L$ for some $g_0 \in G$ such that $g_0^{-1}Kg_0 \subseteq L$. Consequently, $\varphi$ induces a cellular map $X^L \to X^K$, $x \mapsto g_0x$, which descends to the chain level to define a homomorphism $\underline{C}_n(\varphi) \colon C_n(X^L) \to X_n(X^K)$.
		\end{itemize}
		For any $n\geq 1$, there is the obvious $\textrm{Or}_{\mathcal{F}}G$-homomorphism $\underline{d}_n \colon \underline{C}_n(X) \to \underline{C}_{n-1}(X)$, and so we have a $\textrm{Or}_{\mathcal{F}}G$-chain complex $(\underline{C}_*(X), \underline{d}_*)$.
		
	\end{example} 
	
	Using notation from the example above, define the \textit{Bredon cohomology} of $X$ with respect to the family $\mathcal{F}$ and with coefficients in a $\textrm{Or}_{\mathcal{F}}G$-module $\underline{M}$ as 
	$$ H_{\mathcal{F}}^*(X, \underline{M}) = H^*\big(\Hom_{\textrm{Or}_{\mathcal{F}}G}(\underline{C}_*(X), \underline{M})\big). $$
	
	The \textit{Bredon cohomological dimension} of $G$ with respect to $\mathcal{F}$, denoted $\cd_{\mathcal{F}}\,G$, is the length of the shortest possible $\textrm{Or}_{\mathcal{F}}G$-projective resolution of $\underline{\mathbb{Z}}$, where $\underline{\mathbb{Z}}$ is a constant $\textrm{Or}_{\mathcal{F}}G$-module which sends every morphism to $\textrm{id} \colon \mathbb{Z} \to \mathbb{Z}$. Recall that $\langle H \rangle$ denotes the smallest full family of subgroups of $G$ containing $H$.
	
	\begin{theorem}\label{thm:Adamson_as_Bredon}
		Given a $G$-module M, let $\underline{M}$ be the $\textnormal{Or}_{\langle H\rangle} G$-module defined by setting $\underline{M}(G/K) = M^K$. Then 
		$$ H^*\big([G:H],M\big) \cong H^*_{\langle H\rangle}(E_{\langle H \rangle}G, \underline{M}).$$
		In particular, $\cd\,[G:H] \leq \cd_{\langle H \rangle}G$. 
	\end{theorem}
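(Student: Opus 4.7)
The plan is to pick the model $X=*^{\infty}(G/H)$ of $E_{\langle H\rangle}G$ used in Subsection 1.1 and read both cohomologies off the same $G$-CW structure. In the natural simplicial structure on $X$, the $n$-cells are indexed by the diagonal $G$-set $(G/H)^{n+1}$, and the cell with vertex tuple $(g_0H,\ldots,g_nH)$ has stabilizer $\bigcap_i g_iHg_i^{-1}\in\langle H\rangle$. Consequently the cellular $G$-chain complex is exactly $C_*(G/H)=\mathbb{Z}[(G/H)^{*+1}]$, i.e.\ Hochschild's standard $(G,H)$-projective resolution of $\mathbb{Z}$. Simultaneously, $X$ is a $G$-CW model of $E_{\langle H\rangle}G$ with isotropy in $\langle H\rangle$, so $\underline{C}_*(X)$ is an $\textnormal{Or}_{\langle H\rangle}G$-projective resolution of $\underline{\mathbb{Z}}$ and computes $H^*_{\langle H\rangle}(E_{\langle H\rangle}G,\underline{M})$.

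I would then establish the key cochain-level identification
$$\Hom_{\textnormal{Or}_{\langle H\rangle}G}\bigl(\underline{C}_*(X),\underline{M}\bigr)\cong\Hom_G\bigl(C_*(X),M\bigr).$$
Both sides decompose as products over $G$-orbits of cells, so it suffices to check the identification on a single orbit with stabilizer $K\in\langle H\rangle$. On the right, the contribution is $\Hom_G(\mathbb{Z}[G/K],M)=M^K$. On the left, the contribution is the representable functor $\textnormal{Hom}_G(-,G/K)$, whose natural transformations into $\underline{M}$ equal $\underline{M}(G/K)=M^K$ by the Yoneda lemma. Summing over orbits and using naturality of the Yoneda bijection under the restriction maps $\underline{C}_*(X)(G/L)\to\underline{C}_*(X)(G/K)$ shows that the differentials on the two sides match, since both are induced by the same cellular boundary operator on $X$. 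The resulting isomorphism on cohomology is the desired $H^*\bigl([G:H],M\bigr)\cong H^*_{\langle H\rangle}(E_{\langle H\rangle}G,\underline{M})$.

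For the dimension inequality, I would argue that any $\textnormal{Or}_{\langle H\rangle}G$-projective resolution $\underline{P}_*\to\underline{\mathbb{Z}}$ of length $n=\cd_{\langle H\rangle}G$ evaluates at the principal component $G/e$ to an exact sequence of $G$-modules, since evaluation is exact. Each free generator $\textnormal{Hom}_G(-,G/K)$ evaluates to the $G$-module $\mathbb{Z}[G/K]$; for $K\in\langle H\rangle$ one has $g^{-1}Kg\subseteq H$ for some $g\in G$, and therefore $\mathbb{Z}[G/K]\cong\mathbb{Z}[G]\otimes_H\mathbb{Z}[H/g^{-1}Kg]$ is $(G,H)$-projective. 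Thus $\underline{P}_*(G/e)$ is a $(G,H)$-projective resolution of $\mathbb{Z}$ of length $n$, which gives $\cd\,[G:H]\le\cd_{\langle H\rangle}G$.

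The main obstacle is the cochain-level identification in the second step: while the cell-by-cell Yoneda calculation is immediate, one must carefully verify that the boundary maps in $\underline{C}_*(X)$, which encode the restrictions of cellular boundaries to each fixed-point subcomplex, transport through the Yoneda isomorphism to the cellular boundary maps defining $C_*(X)$. This naturality check is formal but requires unwinding the definitions of both $\Hom$-complexes and how restriction along morphisms $G/L\to G/K$ in $\textnormal{Or}_{\langle H\rangle}G$ acts on cell-orbit contributions.
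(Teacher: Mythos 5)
Your argument for the main isomorphism is essentially the paper's: both fix a $G$-CW model of $E_{\langle H\rangle}G$ whose equivariant $n$-cells are indexed by the diagonal $G$-set $(G/H)^{n+1}$ and then identify the Bredon cochain complex with $\Hom_G$ of the standard relative resolution; your Yoneda/free-module packaging of that identification is a cleaner way of doing what the paper does by hand with its map $\Phi$. One imprecision: the infinite join $*^{\infty}(G/H)$ does not literally have $n$-cells indexed by $(G/H)^{n+1}$ (a cell also records which $n+1$ join coordinates are nonzero), so its cellular $G$-chain complex is not the standard resolution on the nose. You should instead take the $\Delta$-complex model whose $n$-simplices are $(G/H)^{n+1}$, as the paper does; this matters because your identification of the Bredon side with the Adamson complex depends on the cellular chain complex being exactly $\mathbb{Z}[(G/H)^{*+1}]$.

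The genuine gap is in your last step. Evaluating a length-$n$ projective $\textrm{Or}_{\langle H\rangle}G$-resolution at $G/e$ gives an exact sequence of $(G,H)$-projective modules, but a $(G,H)$-projective resolution must in addition be $(G,H)$-exact (each kernel an $H$-direct summand), and this does not follow from exactness together with relative projectivity of the terms: for $H=G$ every module is $(G,G)$-projective and $(G,G)$-exactness means $G$-splitness, yet the standard free resolution of $\mathbb{Z}$ over a nontrivial finite group is exact and not split. Without $(G,H)$-exactness the evaluated complex need not compute $\Ext_{(G,H)}$, so the bound on $\cd\,[G:H]$ does not follow as stated. Fortunately the inequality is immediate from the isomorphism you already established: $H^k_{\langle H\rangle}(E_{\langle H\rangle}G,\underline{M})=\Ext^k_{\textrm{Or}_{\langle H\rangle}G}(\underline{\mathbb{Z}},\underline{M})$ vanishes for $k>\cd_{\langle H\rangle}G$, hence $H^k([G:H],M)=0$ in that range for every $M$, and the vanishing characterization of $\cd\,[G:H]$ recorded in the paper gives $\cd\,[G:H]\leq\cd_{\langle H\rangle}G$. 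Replace your final paragraph by this observation.
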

	
	We note that this result has been recently derived with different methods in \cite{Arcin18}, and also in \cite{YalcinPamuk} when $G$ is a finite group.
	
	\begin{proof}
		In what follows, we take as a model for $E_{\langle H \rangle}(G)$ the geometric realization of a suitable $\Delta$-complex such that its cellular chain complex coincides with the standard resolution of $G$ relative to $H$ (for details on the construction see \cite[Proposition 4.16]{Arcin}). For comparing Adamson and Bredon cohomologies, evaluate the cellular $\textrm{Or}_{\langle H \rangle}(G)$-chain complex on the principal component, with gives us $$ C_n^{\langle H \rangle}(E_{\langle H \rangle}(G))(G/e) = \text{H}_n(E_{\langle H \rangle}(G)_{n+1},E_{\langle H \rangle}(G)_n).  $$ By excision, we have that $$  C_n^{\langle H \rangle}(E_{\langle H \rangle}(G))(G/e) = \mathbb{Z}[(G/H)^{n+1}].  $$

		For every $n \geq 0$ define a homomorphism 
		$$ \Phi\colon \Hom_G\big(\mathbb{Z}\big[(G/H)^n\big],M\big) \rightarrow \Hom_{\textrm{Or}_{\langle H \rangle}G}\big(\underline{C}_n(E_{\langle H \rangle}G),\underline{M}\big) $$ 
		by assigning to every $\varphi \in \Hom_G\big(\mathbb{Z}\big[(G/H)^n\big],M\big) $ a map $\varphi_K$ for every subgroup $K \in \langle H \rangle$, defined as the composition
		
		$$ \mathbb{Z}[((G/H)^K)^n] \hookrightarrow \mathbb{Z}[(G/H)^n] \xrightarrow{\varphi} M $$ where the first map is the inclusion (that is, the one induced by the trivial element). If we consider, given $H, K \in \langle H \rangle$, an equivariant map $G/L \rightarrow G/K$, which can be identified as a $g \in G$ such that $gLg^{-1} \leq K$, we have the following diagram
		
		$$
		\begin{tikzcd}
			\mathbb{Z}[((G/H)^K)^n] \arrow[r] \arrow[d]  & \mathbb{Z}[((G/H)^L)^n] \arrow[d] \\
			\mathbb{Z}[(G/H)^n] \arrow[d,"\varphi"'] & \mathbb{Z}[(G/H)^n] \arrow[d, "\varphi"]\\
			M^K \arrow[r] & M^L
		\end{tikzcd}
		$$ where both the top and bottom horizontal morphisms denote action by $g$. Due to the fact that $\varphi$ is a $G$-module homomorphism, the diagram above is commutative. Moreover, $\Phi$ commutes with the differential. Indeed, if we consider $\Phi(\delta \varphi)$ with $\varphi \in \Hom_G(\mathbb{Z}[(G/H)^n],M)$ (and $\delta$ the corresponding differential) we obtain, for every $g\colon G/L \rightarrow G/K$, a diagram analogous to the one above with the top and bottom horizontal arrows being the action by $g$ and the vertical ones the maps which assigns to every tuple $(x_0, \cdots, x_n)$ the element $\sum_i (-1)^i \varphi(x_0, \cdots, \widehat{x_i}, \cdots, x_n)$. Now, considering $\partial$ as the differential in the Bredon complex, for every $g\colon G/L \rightarrow G/K$ $\partial \Phi(\varphi)$ gives us a diagram
		
		$$\small{\begin{tikzcd}
				& \mathbb{Z}[((G/H)^K)^n] \arrow[rr] \arrow[dd, "\varphi_K", near start]  & & \mathbb{Z}[((G/H)^L)^n] \arrow[dd,"\varphi_L"]  \\
				\mathbb{Z}[((G/H)^K)^{n+1}] \arrow[dd, "\varphi^{'}_K"] \arrow[rr, crossing over] \arrow[ru, "d"] & & \mathbb{Z}[((G/H)^L)^{n+1}] \arrow[ru, "d"] \\
				& M^K \arrow[rr] & & M^L\\
				M^K \arrow[rr] & & M^L \arrow[from=uu, "\varphi^{'}_L", near start, crossing over]
		\end{tikzcd}}$$ with the diagonal arrows the respective differentials in the corresponding complexes and $\varphi^{'}_{\ast}$ defined as the composition $\varphi_{\ast} \circ d$. Now, given that every $\varphi_*$ is defined as the composition of an inclusion followed by $\varphi$, $\varphi^{'}_{\ast}$ assigns to every tuple $(x_0, \cdots, x_n)$ the element $\sum_i (-1)^i \varphi(x_0, \cdots, \widehat{x_i}, \cdots, x_n)$. And so $\Phi$ is a well-defined homomorphism of cochain complexes.
		
		Finally, the map $\Phi$ is surjective and injective. In order to see surjectiveness, construct for any map $$\alpha \in \Hom_{\textrm{Or}_{\langle H \rangle}(G)}(C_{\ast}^{\langle H \rangle}(E_{\mathcal{\langle H \rangle}}(G)),\underline{M})$$ and for every $K \in \langle H \rangle$ a diagram $$\begin{tikzcd}
			\mathbb{Z}[((G/H)^K)^n] \arrow[r, hookrightarrow] \arrow[d, "\alpha_K"] & \mathbb{Z}[G/H] \arrow[d, "\alpha_e"]\\
			M^K \arrow[r] & M \end{tikzcd}$$
		where the top vertical arrow is the inclusion induced by $e$. Then such a map $\alpha$ can be seen as the image of $\alpha_e$ via $\Phi$. The injectivity is immediate from the definition of $\Phi$.
		
		Given that $\Phi$ is a bijective map for every $n$, there exists a map \[\Psi \colon \text{Hom}_{\textrm{Or}_{\langle H \rangle}G}\big(\underline{C}_n(E_{\langle H \rangle}G),\underline{M}\big) \rightarrow \text{Hom}_G\big(\mathbb{Z}\big[(G/H)^n\big],M\big) \] such that $\Phi_n \circ \Psi_n$ and $\Psi_n \circ \Phi_n$ are the respective identities for every $n \geq 0$. The map $\Psi$ is easily seen as a chain homomorphism, given that \[ \Psi_{n+1} \circ \partial_n = \Psi_{n+1} \partial_n (\Phi_n \Psi_n) = \Psi_{n+1}(\Phi_{n+1} \delta_n) \Psi_n = \delta_n \circ \Psi_n. \] Finally, we have that $\Psi_n \circ \Phi_n - \text{Id} =  \delta_{n-1} h_n + h_{n+1} \delta_n$ and $\Phi_n \circ \Psi_n - \text{Id} = \partial_{n-1} h'_n + h'_{n+1} \partial_n $ for \[h_n \colon \text{Hom}_G\big(\mathbb{Z}\big[(G/H)^n\big],M\big) \rightarrow \text{Hom}_G\big(\mathbb{Z}\big[(G/H)^{n-1}\big],M\big) \] and \[h'_n \colon \text{Hom}_{\textrm{Or}_{\langle H \rangle}G}\big(\underline{C}_n(E_{\langle H \rangle}G),\underline{M}\big) \rightarrow \text{Hom}_{\textrm{Or}_{\langle H \rangle}G}\big(\underline{C}_{n-1}(E_{\langle H \rangle}G),\underline{M}\big) \] diagonal maps corresponding with sending every element in their respective domains to $0$. Thus $\Phi$ defines a chain homotopy equivalence between the Adamson and Bredon cochain complexes, which gives us the desired isomorphism $ H^*\big([G:H],M\big) \cong H^*_{\langle H\rangle}(E_{\langle H \rangle}G, \underline{M}).$
	\end{proof}
	
	Even though Bredon cohomology theory has raised an extensive amount of research since its very inception, the main setback is still the high difficulty of making not only explicit computations, but also of obtaining good bounds for cohomological dimension in most cases. In the face of this structural difficulty Adamson cohomology offers a simpler tool, both theoretically and computationally that, as we just showed, allows to bound Bredon cohomological dimension from below. The natural question that arises is when does Adamson cohomological dimension detect Bredon cohomological dimension?
	
	The most natural example of coincidence of both dimensions happens, as expected, when the subgroup is normal. Indeed, consider $H \leq G$ a subgroup and $K \triangleleft G$ is a normal subgroup contained in $H$. The group $G$ acts on $E_{\langle H/K \rangle}(G/K)$ through the natural projection to the lateral classes by $K$, and if we take an $H/K$-fixed point $p \in E_{\langle H/K \rangle}(G/K)$, $p$ is also an $H$-fixed point, given that $h p = hK p = p$ for any $h \in H$. Since the projection onto the lateral classes by $K$ sends $ghg^{-1}$ to $gK(hK)g^{-1}K$, the definition of the family $\langle H \rangle$ and the universal property of $E_{\langle H \rangle}(G)$ gives us a way of relating the model of the classifying space of $G$ with respect to $\langle H \rangle$ and the classifying space modulo $K$:    
	
	\begin{proposition}[\cite{Arcin}, Proposition 4.21 and Corollary 4.22]\label{UniversalSpnormalsubg}
		Let $H \leq G$, and $K \triangleleft G$ a normal subgroup of $G$ contained in H. Then, a model for $E_{\langle H/K \rangle}(G/K)$ is also a model for $E_{\langle H \rangle}(G)$. In particular, if $H$ is normal in $G$, $E(G/H)$ is a model for $E_{\langle H \rangle}(G)$. 
	\end{proposition}
	
	The natural future line of work here is to investigate in which other cases Adamson cohomological dimension is enough to detect Bredon cohomological dimension, and to study how to control and bound the differences between them when they differ. 
	
	Additionally, despite the fact that Adamson cohomology is easier to approach than Bredon cohomology, in general it is not a simple task to make explicit calculations. As such, there is ample room for future investigation of ways of computing Adamson cohomological dimension. In particular, the authors trust the naturality of the Adamson canonical class will prove fruithful in this matter.

	\section{Final remarks on $\secat(H\hookrightarrow G)$}

	In view of \cite[Corollary 3.5.1]{FGLO17}, $\tc(\pi) \leq \textrm{cd}_{\langle\Delta_{\pi}\rangle}(\pi \times \pi)$ under certain mild assumptions on $\pi$. By its generalization in \ref{secatlessdim} and the definition of Bredon cohomological dimension, we know that $\secat(H \hookrightarrow G) \leq \cd_{\langle H \rangle}G$. It is therefore hard not to ask whether $\tc(\pi) = \cd_{\langle \Delta_{\pi}\rangle}(\pi \times \pi)$ or, more generally, whether $\secat(H\hookrightarrow G) = \cd_{\langle H\rangle} G$. The latter cannot possibly be true, as the following examples show.
	
	\begin{example}
		(1) Consider the inclusion $2 \mathbb{Z} \hookrightarrow \mathbb{Z}$. By \ref{propertysecat}, $\secat(2\mathbb{Z} \hookrightarrow \mathbb{Z}) = 1$. On the other hand, given that the subgroup is normal, the Adamson cohomology coincides with the usual cohomology of the quotient (see \cite[Theorem 3.2]{Adamson}) and then $H^*([\mathbb{Z}: 2\mathbb{Z}]) = H^*(\mathbb{Z}/2 \mathbb{Z})$. Therefore $\cd[G:H] = \cd\,\mathbb{Z}_2 = \cd_{\langle 2\mathbb{Z}\rangle} \mathbb{Z} = \infty$.
		
		(2) It is perhaps interesting to note that this phenomenon is not torsion-related. Consider the inclusion $[F_n, F_n] \hookrightarrow F_n$, where $F_n$ denotes the free group of $n$ generators, and $[F_n,F_n]$ its commutator subgroup. Similarly as above, $\secat\big([F_n, F_n] \hookrightarrow F_n\big) = 1$, but $\cd[F_n : [F_n, F_n]] = \cd_{\langle [F_n, F_n] \rangle} F_n = \cd\,\mathbb{Z}^n = n$.
	\end{example}
	
	Nevertheless, it is possible to find cases where sectional category and Adamson cohomological dimension coincide, as in the next example. 
	
	\begin{example}
		(1) Recall that a group $G$ is said to be nilpotent of order $n$ if there exists a series of normal subgroups $$ \{ 1 \} = G_0 \triangleleft G_1 \triangleleft \cdots \triangleleft G_n = G $$ where $G_{i+1}/G_i \leq Z(G/G_i)$ (equivalently $[G,G_{i+1}] \leq G_i$). Consider the group $$ H_3 = \langle a_1, a_2, b \vert [a_1,b] = 1, [a_2,b] = 1, [a_2,a_1] = b \rangle  $$ known as the \textit{three dimensional Heisenberg group}. This is one of the most paradigmatic torsion-free nilpotent groups. The infinite cyclic group $N$ generated by $b$ is a central subgroup of $H_3$ and $H_3$ fits in a central group extension $$ 0 \rightarrow N \rightarrow H_3 \rightarrow F \rightarrow 0 $$ where $F$ is a free abelian group with basis in one-to-one correspondence with the generators of $H_3$. Given that $N$ is central (and so is normal) in $H_3$, by \cite[Theorem 3.2]{Adamson} we have $H^*([H_3:N]) = H^*(F)$, with $H^n(F) \cong \mathbb{Z}^{\binom{2}{n}}$. The details on how to obtain the cohomology ring structure for integer coefficients of $H_3$ can be consulted in \cite{Huebschmann89}. It can be seen that the nilpotence of the kernel of the homomorphism in cohomology induced by the inclusion $N \hookrightarrow H_3$ is $2$ and so, by Theorem \ref{propertysecat}, we obtain $2 \leq \secat(N \hookrightarrow H_3) \leq 3 = \cd(H_3)$. Proposition \ref{UniversalSpnormalsubg} gives us that a model for $E_{\langle N \rangle}H_3$ is homotopically equivalent to $EH_3/N$, and then $\dim E_{\langle N \rangle}H_3 = \cd[H_3:N] = \cd(H_3/N) = \cd(F) = 2 $. By Theorem \ref{thm:secat_FGLO_char} and its corollary \ref{secatlessdim} we know that $\secat(N \hookrightarrow H_3) \leq \dim E_{\langle N \rangle}H_3$. Therefore $\secat(N \hookrightarrow H_3) \leq \cd[H_3:N] = 2$ and thus $$\secat(N \hookrightarrow H_3) = \cd[H_3:N] = 2.$$  
		
		(2) Consider the subgroup inclusion of $$\mathbb{Z} \hookrightarrow \mathbb{Z} \times \mathbb{Z} \rightarrow \mathbb{Z}$$ taken as the inclusion in the first factor. This can be represented by the fibration \[ S^1 \times \mathbb{R} \xrightarrow{\id \times \text{exp}} T^2.  \] where $\text{exp}$ denotes the exponential map $\text{exp} \colon \mathbb{R} \rightarrow S^1$ defined by $\text{exp}(\theta) = e^{i \theta}$. Looking at $T^2$ as $S^1 \times S^1$, take as an open cover the one defined by $U_0 = S^1 \times S^1\setminus\{1\} $ and $U_1 = S^1 \times S^1 \setminus\{ -1 \}$ (or, equivalently, any choice of two antipodal points). For each $U_i$, we have a local section of the fibration, defined by taking the identity on the $S^1$ factor, and lifting the other factor to $\mathbb{R}$ by considering the argument of the exponential map, i.e. $s_i(e^{i \theta_1}, e^{i \theta_2}) = (e^{i \theta_1}, \theta_2)$. This informs us that $\secat(\mathbb{Z} \hookrightarrow \mathbb{Z} \times \mathbb{Z}) = 1$. By the normality of the subgroup, we have that $\cd[\mathbb{Z} \times \mathbb{Z}: \mathbb{Z}] = \cd(\mathbb{Z}) = 1$ and consequently $\secat(\mathbb{Z} \hookrightarrow \mathbb{Z} \times \mathbb{Z}) = \cd[\mathbb{Z} \times \mathbb{Z}: \mathbb{Z}] $  
		
	\end{example}
	
	As an open question, it remains to elucidate the full relationship between sectional category of subgroup inclusions and Adamson cohomological dimension with respect to the subgroup, and in which cases the former can be represented, or bounded in some direction, by the latter. This line of work, of course, is strongly related to the investigation of how good Adamson cohomological dimension detects Bredon cohomological dimension, and how big the difference can be in interesting cases, as discussed at the end of section 3. It is also important to note that these cases do not provide much information in the context of topological complexity, given that the diagonal subgroup is normal in the product group only under the assumption that the group is abelian. As such, investigating the full scope of the ideas developed in this paper for the case of the diagonal subgroup inclusion should be a matter for future work. 
	
	In \cite{FGLO17} an analogue of the Costa--Farber canonical class is defined in the context of Bredon cohomology, $\textbf{u} \in H_{\langle\Delta_{\pi}\rangle}^1(\pi \times \pi, \underline{I})$. This class is universal. Moreover, the image of this class via the homomorphism $\rho^*$ is precisely the usual Costa-Farber class. If instead of the family $\langle\Delta_{\pi}\rangle$ generated by the diagonal subgroup, we take the family of subgroups $\mathcal{F}$, generated by a subgroup $H \leq G$, it is possible to define a cohomology class represented by the short exact sequence of Bredon modules $$ 0 \rightarrow \underline{I} \rightarrow \mathbb{Z}[?,G/H] \xrightarrow{\varepsilon} \underline{\mathbb{Z}} \rightarrow 0 $$ where $\underline{I}$ is the kernel of the augmentation $\varepsilon$. Let us denote it also, abusing notation, by $\textbf{u}$. This class is the canonical class associated to the family $\mathcal{F}$, and arguments analogues to the case of the diagonal family show that it is also universal, and, evaluating in the principal component, it is immediate that its image by the principal component evaluation homomorphism $\rho^1\colon H^1_{\langle H \rangle}(G,\underline{I}) \to H^1(G,\underline{I}(G/e))$ is the Berstein class of $G$ relative to $H$ introduced in Subsection \ref{sect:Bernstein_class}.
	
	\begin{remark}
		To our knowledge, despite Theorem \ref{thm:Adamson_as_Bredon}, universality of the Bredon class does not imply in a straightforward manner universality of the Adamson class. This is due to the fact that $I^H$ need not coincide with $\underline{I}(G/H)$.
	\end{remark}

	Write $\rho_{\langle H \rangle}$ for the greatest integer $n \geq 0$ such that the principal component evaluation homomorphism
	\[ \rho^n \colon H^n_{\langle H \rangle}(G,\underline{M}) \to H^n(G,\underline{M}(G/e)) \]
	is non-trivial for some $\textrm{Or}_{\langle H \rangle}G$-module $\underline{M}$. A straightforward generalization of \cite[Theorem 4.1]{FGLO17}, using Theorem \ref{thm:secat_FGLO_char}, shows that \[\rho_{\langle H\rangle}\le \secat(H\hookrightarrow G).\]
	The next result shows that this lower bound for sectional category is never better than the standard cohomological lower bound.
	
	\begin{proposition}\label{prop:heightGtRho}
		With the notation above, $\height(\omega) \ge \rho_{\langle H \rangle} $.
	\end{proposition}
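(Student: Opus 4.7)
The plan is to leverage the universality of the Bredon canonical class $\mathbf{u} \in H^1_{\langle H \rangle}(G, \underline{I})$, recalled immediately before the proposition, together with the noted identity $\rho^1(\mathbf{u}) = \omega$. The strategy is to argue that any class in Bredon cohomology with nontrivial image under principal component evaluation must produce a nonzero power of $\omega$ via naturality and multiplicativity of $\rho$.

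Concretely, set $n = \rho_{\langle H \rangle}$ and pick, by definition, an $\textnormal{Or}_{\langle H \rangle} G$-module $\underline{M}$ together with a class $\lambda \in H^n_{\langle H \rangle}(G, \underline{M})$ for which $\rho^n(\lambda) \neq 0$ in $H^n\big(G, \underline{M}(G/e)\big)$. Then invoke universality of $\mathbf{u}$ to write $\lambda = h_*(\mathbf{u}^n)$ for some $\textnormal{Or}_{\langle H \rangle} G$-homomorphism $h \colon \underline{I}^{\otimes n} \to \underline{M}$, and let $\bar{h} \colon I^{\otimes n} \to \underline{M}(G/e)$ denote the $G$-module map obtained by evaluating $h$ at $G/e$ (noting $\underline{I}(G/e) = I$).

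Naturality of $\rho$ in the coefficients combined with its multiplicativity with respect to cup products then yields
\[ \rho^n(\lambda) = \rho^n\big(h_*(\mathbf{u}^n)\big) = \bar{h}_*\big(\rho^n(\mathbf{u}^n)\big) = \bar{h}_*\big((\rho^1(\mathbf{u}))^n\big) = \bar{h}_*(\omega^n). \]
Since the left-hand side is nonzero, $\omega^n$ itself must be nonzero in $H^n(G, I^{\otimes n})$, which gives $\height(\omega) \geq n = \rho_{\langle H \rangle}$, as desired.

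The step I expect to require the most care is the multiplicativity of the principal component evaluation $\rho$. Both cup products — on Bredon cohomology of $E_{\langle H \rangle} G$ and on ordinary cohomology of $G$ — are computed via a diagonal approximation on a projective resolution, and the chain map inducing $\rho$ is realized at $G/e$ by the identification described in the proof of Theorem \ref{thm:Adamson_as_Bredon}. Since evaluation at $G/e$ is a natural operation compatible with tensor products, the two diagonal approximations should correspond under it, which in turn forces $\rho$ to respect products. Making this compatibility explicit is the main technical obstacle; once it is in place, the computation above closes the argument immediately.
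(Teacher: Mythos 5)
Your argument is essentially identical to the paper's: the paper also takes a class $\alpha$ with $\rho^*(\alpha)\neq 0$, uses universality of $\mathbf{u}$ to write $\alpha = f^*(\mathbf{u}^n)$, and concludes via the commutative square expressing naturality of $\rho^*$ in the coefficients together with the identity $\rho^*(\mathbf{u}^n)=\omega^n$. The multiplicativity point you flag as the delicate step is precisely what the paper absorbs into the unproved assertion $\rho^*(\mathbf{u}^n)=\omega^n$, so your write-up is if anything slightly more explicit about where the work lies.
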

	
	\begin{proof}
		Suppose there exists $\alpha \in H^n_{\langle H \rangle}(G,\underline{M})$ such that $\rho^*(\alpha) \neq 0$. Universality of $\textbf{u}$ implies that there exists an $\textrm{Or}_{\langle F\rangle}G$-homomorphism $f \colon \underline{I}^{n} \to \underline{M}$ such that $f^*(\textbf{u}^n) = \alpha$. But then $f$ induces also a $G$-module homomorphism between the principal components of Bredon modules, and thus, it gives a commutative diagram of group cohomologies $$ \begin{tikzcd}
			H_{\langle H \rangle}^n(G, \underline{I}^{n}) \arrow[r, "\rho^*"] \arrow[d, "f^*"]  & H^n(G, I^{\otimes n}) \arrow[d, "f^*"]  \\
			H_{\langle H \rangle}^n(G, \underline{M}) \arrow[r, "\rho^*"']  & H^n(G, M).
		\end{tikzcd} $$ By hypothesis $\rho^*(\alpha) \neq 0$ so $\rho^*(f^*(\textbf{u}^n)) \neq 0$, and, by commutativity, $\rho^*(\textbf{u}^n) = \omega^n$, the Berstein class of $G$ relative to $H$, is nonzero.
	\end{proof}
	
	The last proposition allows for a particularly simple proof of Theorem \ref{th:GenerCostaFarber}.
	
	\begin{proof}[Proof of Theorem \ref{th:GenerCostaFarber}] 
		The ``only if'' part is an immediate consequence of the kernel-nilpotency lower bound for sectional category, see \ref{propertysecat}. For the converse statement, recall that the extension problem
		$$ \begin{tikzcd}
			EG_{n-1} \arrow[rr, "\rho"] \arrow[rd, hook] & & (E_{\fH}G)_{n-1} \\
			& EG_{n} \arrow[ru, dashrightarrow] 
		\end{tikzcd} $$  
		has a solution provided that the cocycle $c^{n}(\rho)$ representing the extension is cohomologous to zero in $H^{n}\big(G, \pi_{n-1}(E_{\fH}G)_{n-1}\big)$. Let us take a closer look at how the obstruction cocycle arises; further details can be found in \cite[Chapter II.3]{TomDieck}. Write $[\rho]$ for the $G$-homotopy class of $\rho \colon EG_{n-1} \to (E_{\fH}G)_{n-1}$. Note that both $EG_{n-1}$ and $(E_{\fH}G)_{n-1}$ are $(n-2)$-connected spaces, and the pair $(EG_n,EG_{n-1})$ is \mbox{$(n-1)$}-connected, hence the (relative) Hurewicz homomorphism gives isomorphisms
		\begin{align*}
			\pi_n(EG_n, EG_{n-1}) &\to H_n(EG_n, EG_{n-1}),\\ 
			\pi_{n-1}(EG_{n-1}) &\to H_{n-1}(EG_{n-1}),\\
			\pi_{n-1}\big((E_{\fH}G)_{n-1}\big) &\to H_{n-1}\big((E_{\fH}G)_{n-1}\big).
		\end{align*}
		Consequently, we have a diagram 
		$$\begin{tikzcd}
			\pi_n(EG_n, EG_{n-1}) \arrow[r, "\partial"] \arrow[d, "\varrho"] & \pi_{n-1}(EG_{n-1}) \arrow[r, "\rho_*"] \arrow[d,"\varrho"] & \pi_{n-1}\big((E_{\fH}G)_{n-1}\big) \arrow[d,"\varrho"] \\
			H_n(EG_n, EG_{n-1}) & H_{n-1}(EG_{n-1}) & H_{n-1}\big((E_{\fH}G)_{n-1}\big),
		\end{tikzcd}$$ 
		where $\partial$ is the boundary operator of the long exact sequence of homotopy groups of the pair $(EG_n, EG_{n-1})$ and can be identified as the corresponding epimorphism over the kernel of the $n$-differential in the cellular chain complex via the Hurewicz isomorphisms. The obstruction cocycle associated to $\rho$ is defined as
		$$ c^{n}(\rho) = \rho_* \partial \varrho^{-1}. $$
		
		By hypothesis and the previous proposition, we conclude $\rho^*$ is trivial in degree~$n$. Naturality of the Hurewicz isomorphism implies that the obstruction class lives in the image of $\rho^n$ and therefore it must be zero. The Eilenberg-Ganea theorem (\cite[Theorem 1]{EG65}, also \cite[Theorem 7.1]{Brown82}) states that $\cd(G) = n$ implies the existence of a $n$-dimensional $EG$ thus, by dimensional reasons, the map $\rho$ can be then extended to the whole space $EG$. Consequently, $\secat(H \hookrightarrow G) \leq n-1$ by Theorem \ref{thm:secat_FGLO_char}.

	\end{proof}
	
	\noindent\textbf{Acknowledgements.}  The first author has been supported by the National Science Centre grant 2015/19/B/ST1/01458. The second and third authors have been supported by the National Science Centre grant 2016/21/P/ST1/03460 within the European Union's Horizon 2020 research and innovation programme under the Marie Sk\l{}odowska-Curie grant agreement No. 665778.
	
	The authors want to thank the referee for his/her thorough review, and for the many useful commentaries provided that have helped to improve the quality of this paper. 
	
	\begin{flushright}
		\includegraphics[width=38px]{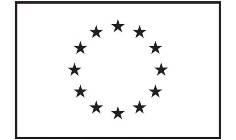}%
	\end{flushright}
	
	\bibliography{bibliography}{}
	\bibliographystyle{plain}
	
\end{document}